\documentclass[submission]{eptcs}

\usepackage{iftex}

\usepackage{pgfgantt} 

\usepackage{xcolor}
\definecolor{parchment}{RGB}{214, 204, 169}

\usepackage{microtype}
\usepackage{verbatim}
\usepackage{graphicx}
\usepackage{rotating}

\usepackage{svg}

\usepackage{mathtools}

\usepackage{url}
\definecolor{linkColor}{RGB}{156,78,13}
\usepackage{hyperref}
\hypersetup{
    colorlinks = true,
    allcolors = linkColor 
    }
\usepackage{pslatex}

\setlength{\parindent}{0pt}

\setcounter{secnumdepth}{1}

\usepackage{datetime}
\newdateformat{versiondate}{%
\THEMONTH\THEDAY}


\newcommand{\coprodcup}{\mathbin{\rotatebox[origin=c]{180}{$\Pi$}}}

\usepackage{pgf,tikz,environ}
\usetikzlibrary{arrows}
\usetikzlibrary{cd}

\raggedbottom

\usepackage{xpatch} 
\usepackage{sectsty}
\newcommand{\secfont}{\fontfamily{lmss}\selectfont}
\allsectionsfont{\secfont}
\usepackage{amsthm}
\usepackage{amsmath} 
\usepackage{faktor}

\usepackage{adjustbox}

\newcommand{\defeq}{\vcentcolon=}

\newtheoremstyle{zoltanstyle}
  {1em} 
  {\topsep} 
  {} 
  {} 
  {\bfseries} 
  {.} 
  {.5em} 
  {} 

\theoremstyle{zoltanstyle}
\swapnumbers
\xpatchcmd\swappedhead{~}{.~}{}{}
\newtheorem{body}{}
\numberwithin{body}{section}

\newtheorem{corollary}[body]{\secfont Corollary}
\newtheorem{definition}[body]{\secfont Definition}
\newtheorem{example}[body]{\secfont Example}

\newtheorem{lemma}[body]{\secfont Lemma}

\newtheorem{proposition}[body]{\secfont Proposition}
\newtheorem{remark}[body]{\secfont Remark}
\newtheorem{observation}[body]{\secfont Observation}
\newtheorem{theorem}[body]{\secfont Theorem}

\usepackage{pgf,tikz,environ}
\usepackage{quiver}
\usetikzlibrary{arrows}
\usetikzlibrary{cd}
\usetikzlibrary{positioning}
\usetikzlibrary{fadings}
\usetikzlibrary{decorations.pathreplacing}
\usetikzlibrary{decorations.pathmorphing}
\tikzset{snake it/.style={decorate, decoration=snake}}




\definecolor{parchment}{RGB}{214, 204, 169}

\expandafter\let\expandafter\oldproof\csname\string\proof\endcsname
\let\oldendproof\endproof

\renewenvironment{proof}[1][\proofname]{%
  \oldproof[\normalfont \bfseries #1.]%
}{\oldendproof}

\usepackage{enumitem}
\usepackage{mathptmx}

\usepackage[T1]{fontenc}
\usepackage[utf8]{inputenc}


\usetikzlibrary {positioning}
\usetikzlibrary{fadings}
\usetikzlibrary{decorations.pathreplacing}
\usetikzlibrary{decorations.pathmorphing}
\tikzset{snake it/.style={decorate, decoration=snake}}
%


\definecolor{gold}{RGB}{255,215,0}
\definecolor{softBlack}{RGB}{45, 47, 49}
\definecolor{creamWhite}{RGB}{245,244,241}
\definecolor{softGray}{RGB}{220, 216, 214}
\definecolor{brick}{RGB}{232, 48, 48}


\newcommand{\typesetoperator}[1]{\mathsf{#1}}

\usepackage{xspace}
\definecolor{gold}{RGB}{255,215,0}
\definecolor{softBlack}{RGB}{45, 47, 49}
\definecolor{creamWhite}{RGB}{245,244,241}
\definecolor{softGray}{RGB}{220, 216, 214}
\definecolor{brick}{RGB}{232, 48, 48}

\usepackage{enumitem}



\newcommand{\define}[1]{\textbf{#1}}

\newcommand{\cat}[1]{\mathsf{#1}}

\DeclareMathOperator{\op}{^{\typesetoperator{op}}}

\DeclareMathOperator{\sub}{\typesetoperator{Sub}}

\DeclareMathOperator{\im}{\typesetoperator{im}}
\DeclareMathOperator{\imageObj}{\typesetoperator{im}}
\DeclareMathOperator{\id}{\typesetoperator{id}}

\DeclareMathOperator{\coker}{\typesetoperator{coker}}
\let\ker\relax
\DeclareMathOperator{\ker}{\typesetoperator{ker}}

\DeclareMathOperator*{\colim}{\typesetoperator{colim}}

\newcommand{\typesetproblem}[1]{\textsc{#1}}

\DeclareMathOperator{\fpt}{\typesetproblem{FPT}}


\DeclareMathOperator{\Ua}{\mathcal{U}}  
\DeclareMathOperator{\Va}{\mathcal{V}}









\newcommand{\Z}{\mathbb{Z}}

\newcommand{\restrict}{\!\!\upharpoonright}

\newcommand{\Set}{\cat{Set}}
\newcommand{\Sh}{\cat{Sh}}
\newcommand{\Psh}{\cat{Psh}}
\newcommand{\SPsh}{\cat{SPsh}}
\newcommand{\Cov}{\cat{Cov}}
\newcommand{\Match}{\cat{Match}}

\usepackage{newtxtext}
\usepackage{newtxmath}

\newcommand{\defproblem}[3]{
	\vspace{1mm}
	\noindent\fbox{
		\begin{minipage}{0.96\textwidth}
			\begin{tabular*}{\textwidth}{@{\extracolsep{\fill}}lr} \textsc{#1} &  \\ \end{tabular*}
			{\bf{Input:}} #2 \\
			{\bf{Question:}} #3
		\end{minipage}
	}
	\vspace{1mm}
}

\ifpdf
  \usepackage{underscore}         
  \usepackage[T1]{fontenc}        
\else
  \usepackage{breakurl}           
\fi

\title{Algorithmic and Extremal Obstructions Through the Language of Cohomology}

\author{Anny Beatriz Azevedo
    \institute{University of São Paulo}
    \and
    Benjamin Merlin Bumpus 
    \institute{University of São Paulo}
    \and 
    Matteo Capucci
    \institute{University of Strathclyde}
    \and
    James Fairbanks
    \institute{University of Florida}
    \and
    Daniel Rosiak
    \institute{NIST}
}

\begin{document}

\maketitle

\begin{abstract}
    We model problems as presheaves that assign sets of certificates to input instances and we show how to use presheaf Čech cohomology to capture the precise ways in which local solutions fail to patch into global ones. Applied to problems like \textsc{VertexCover}, \textsc{CycleCover}, and \textsc{OddCycleTransversal}, our framework exposes emergent phenomena, such as hidden cycles or the inflation of small, local solutions. This approach not only rephrases classical results like König’s Theorem in cohomological terms, but also reveals how to systematically account for failures of compositionality. Although our main focus is on presheaves of sets, the methods generalize naturally to Abelian presheaves, suggesting a rich interplay between graph theory, cohomology, and complexity. This work represents a first step toward a systematic, sheaf-theoretic theory of algorithmic structure and related obstructions.
\end{abstract}

\section{Introduction}\label{sec: intro}

\body{
    Obstructions, objects whose existence precludes the possibility of a given mathematical construction, abound in mathematics and, accordingly, many deep results throughout the field are concerned with finding, classifying and accounting for them. Indeed, there is no lack of preeminent conjectures -- such as the \textit{Hodge Conjecture} (a Millennium Prize problem), the \textit{Tate Conjecture} and the \textit{Weil Conjectures}, to name a few famous ones -- that deal more or less directly with studying obstructions.
}

\body{
    In modern, or parameterized complexity theory, the importance of understanding combinatorial obstructions is elucidated by the simple, but profound observation that $\fpt$ \footnote{The class of Fixed-Parameter Tractable problems which are solvable by algorithms that are exponential only in the size of a fixed parameter while polynomial in the size of the input.} is equivalent to polynomial-time pre-processing, or \textit{kernelization}~\cite{downey2012parameterized}. Hence there is a sense in which efficient algorithms are deeply related to either efficiently \textit{computing obstructions} or efficiently reducing an input instance to a small graph. Thus not only are obstructions mathematically relevant, practical and computationally useful, but, as Estivill-Castro, Fellows, Langston and Rosamond point out~\cite{fellows-ptime-extremal-structure}, they stand out as a promising ingredient towards the development of a `mathematical monster machine'~\cite{summoned-from-the-void} -- where theoretical advances are obtained through small, principled, nearly automated steps -- for the development of algorithmics. 
}

\body{
    Thus it would be useful to have an abstract perspective on obstructions in combinatorics and algorithmics. But how, then, in these settings should we go about detecting, accounting and manipulating obstructions in an organized and systematic way? Historically, both in the context of the aforementioned famous conjectures and in many other application areas, \textit{cohomology} stands out as a convenient tool to get the job done~\cite{GallierQuaintanceBook}. Other than areas of discrete mathematics, that are entirely built around cohomological methods (e.g.~\textit{topological data analysis} and \textit{discrete Morse Theory}), classical combinatorics also touts examples of important results which rely indispensably on elegant (co)-homological methods~\cite{topologicalcombinatorics,  matouvsek2003using, scoville2019discrete}. Two specific and celebrated examples include Lovász's resolution of Kneeser's conjecture~\cite{lovasz1978kneser} and Alon's result on splitting necklaces~\cite{alon1987splitting}.
}

\body{
    In this paper, we initiate a systematic account of obstructions arising in algorithmics and we do so by cohomological means. In particular, we find these tools well suited for speaking about both \textit{obstructions to the existence of a solution} and \textit{obstructions to compositionality}. We give a construction that allows us to cohomologically rephrase (but not prove) König's Theorem and other results in the flavor of the Erdös-Pósa. Furthermore, we show that one can use \textit{presheaf} Čech cohomology to encode precisely the kinds of failures that occur when one tries to patch together local solutions into global ones. 
}

\body{
    Our perspective is to view computational problems as structured data assignments attaching sets of certificates to input instances. Said in light category-theoretic language, we view computational problems as \textit{presheaves} and we study their \textit{presheaf} Čech cohomology. Applied to \textsc{VertexCover}, \textsc{CycleCover} and \textsc{OddCycleTransversal}, for instance, the zeroth \textit{presheaf} Čech cohomology group accounts for all \textit{emergent obstructions} such as local small solutions becoming too large when joined together or the emergence of (odd) cycles that are not visible in the small constituent subgraphs comprising a (odd) cycle cover. Furthermore, we develop some cohomological results of independent interest that allow for the application of these methods to other computational problems without needing a deep understanding of cohomology.  
}

\body{
    Viewing computational problem as presheaves is not new to this contribution; indeed, in a short note~\cite{mazzatowards}, Mazza proposes a sheaf-theoretic outlook on complexity theory. Moreover, this perspective is not even foreign to discrete mathematics; for example in Dinur's \href{https://www.youtube.com/watch?v=eIEWMuxt2c8}{talk}\footnote{The talk was held at the Institute for Advanced Study in Princeton and sheaves are mentioned roughly at minute \href{https://www.youtube.com/watch?v=eIEWMuxt2c8}{4:10}.}~on her celebrated proof of the PCP Theorem~\cite{Dinur05}, she points out that the sheaf-theoretic structure of proper colorings of graphs plays a central role. More recently, there has been a growing use of sheaf-theoretic methods in the study of (promise) constraint satisfaction problems~\cite{deciding-sheaves, oconghaile, topological-Hell-Nesetril, filakovsky, Topology-in-PCSP} and distributed computation~\cite{malcolm2009sheaves, felber2025sheaf}.  
}

\body{Identifying the role of compositionality in certain problems is also not new: in applied category theory, there has been a thorough study of compositional structures and their applications~\cite{structured-cospans, decorated-cospans, baez-open-petri-nets, structured-decompositions, chaudhuri-mathematical-framework-biochemical-porcesses}. And along with that also comes a growing need for accounting for non-compositionality and the study of such obstructions~\cite{puca2023obstructions}.  
}

\body{ Although, as we have mentioned above, there has been a growing community which studies graph properties encoded as (pre)-sheaves, this nascent field is still young and there has not been a formal, systematic investigation of the expressiveness of cohomological methods in this setting. For this reason, here we focus on the simplest possible encoding of computational problems as presheaves; namely we consider \textit{presheaves of sets} which assign a \textit{sets of certificates} to each input graph. This choice has a few drawbacks and a few strengths. 
    \begin{itemize}
        \item \textit{Drawbacks:} since we are considering presheaves of sets (rather than, say, presheaves of vector spaces or modules), all of our cohomological computations rely on free Abelianizations. This has the effect that the higher cohomology groups are not of clear significance and thus the full power of homological algebra does not come to bear. 
        \item \textit{Strengths:} even in this comparatively simple setting, we obtain compelling insights into both compositional and structural obstructions to computational problems by studying the zeroth cohomology group. By standard abstract nonsense, these results are all easily seen to generalize immediately to presheaves valued in any Abelian category (i.e. presheaves of Abelian groups or vector spaces over a field or modules over a ring). 
    \end{itemize}
}

\body{
We believe that considering Abelian presheaves -- mappings that attach vector spaces, say, to input instances -- is an exciting and fruitful direction of future study and we suspect that the methods presented here will open new doors for the interplay of graph theory, algebra and topology. Accordingly, we endeavor to make the present article as accessible and self-contained as possible so as to accommodate readers with varied backgrounds.
}

\section{Motivation: Naive Dynamic Programming}

\body{\textit{Divide and conquer}, or \textit{dynamic programming}, is a common approach in algorithmics. The idea is as follows: to find solutions to computational problem on an input $G$, one works locally on small constituent parts thereof and then finds clever ways of patching these local solutions together into global solutions on $G$. A key step in designing these kinds of algorithms is that of identifying how local solutions may fail to patch together and how to go about keeping track of this information. This is what we mean by \textit{obstructions to compositionality} and, as mentioned in Section~\ref{sec: intro}, this article is focused on finding general mathematical language for speaking about such obstructions and mathematical tools for identifying them. It is our contention that \textit{cohomology} is a useful tool for accounting for exactly this kind of information. To that end, in this section we proceed by means of examples to set the scene for the rest of the paper. In particular, we will consider two computational problems (\textsc{Coloring} and \textsc{VertexCover}) and explain at an intuitive level the kinds of obstructions to dynamic programming that we will later on identify algebraically.
}

\body{Consider the following graph (left) broken up into three pieces (right) as follows.

\[
\tikzset{every picture/.style={line width=0.75pt}} 

\]

In particular, this means that, if we find a local solution that never contributes to a family of the kind shown above (i.e of pairwise agreeing local solutions), then we can safely remove it for it will never contribute to a global solution.  
With this in mind, we can find solutions on the whole recursively as follows. We start by asking if there exist pairs of solutions $(c_1, c_2)$ of the first and second pieces respectively such that $c_1$ and $c_2$ have the same restriction: if no such pair exists, then we reject; otherwise, we remove from the sets of local solutions all those elements that cannot be paired up this way. We then proceed recursively with the pieces of the decomposed instance. By what we already observed, this suffices for $3$-\textsc{Coloring} since, if by the end of this process no local set of solutions has become empty, then we can can conclude that the whole admits a proper $3$-coloring.
} \label{algorithm-coloring}

\body{This is slightly non-standard way of writing down the well-known algorithm for coloring by dynamic programming on tree decompositions \cite{downey2013fundamentals, flum2006parameterized}. As it turns out, using the language of category theory one can generalize this algorithm for categories equipped with a notion of \textit{cover} of objects (in this case subgraphs whose union results in the bigger graph) as long as we can attach local solutions into a global one \footnote{For the reader knowledgeable in sheaf theory, one can generalize this algorithm for sufficiently well-behaved categories equipped with a subcanonical topology, as long as the data assignment is compositional, i.e., a sheaf.}, as is the case for coloring~\cite{deciding-sheaves}} (concretely, this amounts to a general categorical algorithm for constraint satisfaction problems). 

\body{In contrast, in the case of \textsc{VertexCover$_2$}, we will see that local solutions are not as well-behaved. We employ the same instance along as the same decomposition into local pieces as before, but now we wish to decide if there exists a vertex cover of the graph of size at most $2$. Once again, assuming each part has at most $k$ vertices in it, we can enumerate the $2^k$ vertex covers of each piece as shown below (where the vertices chosen to be part of local vertex covers are marked in red).

\[
\tikzset{every picture/.style={line width=0.75pt}} 
\begin{tikzpicture}[x=0.65pt,y=0.65pt,yscale=-1,xscale=1]

\draw    (165.51,48) -- (194.36,76) ;
\draw    (165.51,48) -- (196.33,48) ;
\draw [color={rgb, 255:red, 0; green, 0; blue, 0 }  ,draw opacity=1 ]   (151.36,73) ;
\draw [shift={(151.36,73)}, rotate = 0] [color={rgb, 255:red, 0; green, 0; blue, 0 }  ,draw opacity=1 ][fill={rgb, 255:red, 0; green, 0; blue, 0 }  ,fill opacity=1 ][line width=0.75]      (0, 0) circle [x radius= 3.35, y radius= 3.35]   ;
\draw    (283.62,63.03) -- (282.75,102.09) ;
\draw [color={rgb, 255:red, 208; green, 2; blue, 27 }  ,draw opacity=1 ]   (282.75,102.09) ;
\draw [shift={(282.75,102.09)}, rotate = 0] [color={rgb, 255:red, 208; green, 2; blue, 27 }  ,draw opacity=1 ][fill={rgb, 255:red, 208; green, 2; blue, 27 }  ,fill opacity=1 ][line width=0.75]      (0, 0) circle [x radius= 3.35, y radius= 3.35]   ;
\draw [color={rgb, 255:red, 0; green, 0; blue, 0 }  ,draw opacity=1 ]   (283.62,63.03) ;
\draw [shift={(283.62,63.03)}, rotate = 0] [color={rgb, 255:red, 0; green, 0; blue, 0 }  ,draw opacity=1 ][fill={rgb, 255:red, 0; green, 0; blue, 0 }  ,fill opacity=1 ][line width=0.75]      (0, 0) circle [x radius= 3.35, y radius= 3.35]   ;
\draw    (306.62,62.86) -- (305.75,101.92) ;
\draw [color={rgb, 255:red, 208; green, 2; blue, 27 }  ,draw opacity=1 ]   (306.62,62.86) ;
\draw [shift={(306.62,62.86)}, rotate = 0] [color={rgb, 255:red, 208; green, 2; blue, 27 }  ,draw opacity=1 ][fill={rgb, 255:red, 208; green, 2; blue, 27 }  ,fill opacity=1 ][line width=0.75]      (0, 0) circle [x radius= 3.35, y radius= 3.35]   ;
\draw [color={rgb, 255:red, 0; green, 0; blue, 0 }  ,draw opacity=1 ]   (305.75,101.92) ;
\draw [shift={(305.75,101.92)}, rotate = 0] [color={rgb, 255:red, 0; green, 0; blue, 0 }  ,draw opacity=1 ][fill={rgb, 255:red, 0; green, 0; blue, 0 }  ,fill opacity=1 ][line width=0.75]      (0, 0) circle [x radius= 3.35, y radius= 3.35]   ;
\draw    (328.62,62.7) -- (327.75,101.76) ;
\draw [color={rgb, 255:red, 208; green, 2; blue, 27 }  ,draw opacity=1 ]   (327.75,101.76) ;
\draw [shift={(327.75,101.76)}, rotate = 0] [color={rgb, 255:red, 208; green, 2; blue, 27 }  ,draw opacity=1 ][fill={rgb, 255:red, 208; green, 2; blue, 27 }  ,fill opacity=1 ][line width=0.75]      (0, 0) circle [x radius= 3.35, y radius= 3.35]   ;
\draw [color={rgb, 255:red, 208; green, 2; blue, 27 }  ,draw opacity=1 ]   (328.62,62.7) ;
\draw [shift={(328.62,62.7)}, rotate = 0] [color={rgb, 255:red, 208; green, 2; blue, 27 }  ,draw opacity=1 ][fill={rgb, 255:red, 208; green, 2; blue, 27 }  ,fill opacity=1 ][line width=0.75]      (0, 0) circle [x radius= 3.35, y radius= 3.35]   ;
\draw [color={rgb, 255:red, 208; green, 2; blue, 27 }  ,draw opacity=1 ]   (369,239) ;
\draw [shift={(369,239)}, rotate = 0] [color={rgb, 255:red, 208; green, 2; blue, 27 }  ,draw opacity=1 ][fill={rgb, 255:red, 208; green, 2; blue, 27 }  ,fill opacity=1 ][line width=0.75]      (0, 0) circle [x radius= 3.35, y radius= 3.35]   ;
\draw   (323.82,237.68) .. controls (323.82,203.82) and (351.25,176.37) .. (385.08,176.37) .. controls (418.91,176.37) and (446.33,203.82) .. (446.33,237.68) .. controls (446.33,271.55) and (418.91,299) .. (385.08,299) .. controls (351.25,299) and (323.82,271.55) .. (323.82,237.68) -- cycle ;
\draw   (92.82,82.68) .. controls (92.82,48.82) and (120.25,21.37) .. (154.08,21.37) .. controls (187.91,21.37) and (215.33,48.82) .. (215.33,82.68) .. controls (215.33,116.55) and (187.91,144) .. (154.08,144) .. controls (120.25,144) and (92.82,116.55) .. (92.82,82.68) -- cycle ;
\draw   (302.63,21.43) .. controls (336.49,21.18) and (364.14,48.41) .. (364.39,82.23) .. controls (364.64,116.06) and (337.39,143.69) .. (303.53,143.94) .. controls (269.66,144.19) and (242.01,116.96) .. (241.76,83.13) .. controls (241.52,49.3) and (268.77,21.68) .. (302.63,21.43) -- cycle ;
\draw [color={rgb, 255:red, 0; green, 0; blue, 0 }  ,draw opacity=1 ]   (121.51,40) -- (151.36,73) ;
\draw    (121.51,40) -- (152.33,40) ;
\draw    (99.51,75) -- (129.36,108) ;
\draw [color={rgb, 255:red, 0; green, 0; blue, 0 }  ,draw opacity=1 ]   (99.51,75) -- (130.33,75) ;
\draw [shift={(130.33,75)}, rotate = 0] [color={rgb, 255:red, 0; green, 0; blue, 0 }  ,draw opacity=1 ][fill={rgb, 255:red, 0; green, 0; blue, 0 }  ,fill opacity=1 ][line width=0.75]      (0, 0) circle [x radius= 3.35, y radius= 3.35]   ;
\draw    (166.51,87) -- (196.36,120) ;
\draw    (166.51,87) -- (197.33,87) ;
\draw [color={rgb, 255:red, 208; green, 2; blue, 27 }  ,draw opacity=1 ]   (165.51,48) ;
\draw [shift={(165.51,48)}, rotate = 0] [color={rgb, 255:red, 208; green, 2; blue, 27 }  ,draw opacity=1 ][fill={rgb, 255:red, 208; green, 2; blue, 27 }  ,fill opacity=1 ][line width=0.75]      (0, 0) circle [x radius= 3.35, y radius= 3.35]   ;
\draw [color={rgb, 255:red, 208; green, 2; blue, 27 }  ,draw opacity=1 ]   (196.33,48) ;
\draw [shift={(196.33,48)}, rotate = 0] [color={rgb, 255:red, 208; green, 2; blue, 27 }  ,draw opacity=1 ][fill={rgb, 255:red, 208; green, 2; blue, 27 }  ,fill opacity=1 ][line width=0.75]      (0, 0) circle [x radius= 3.35, y radius= 3.35]   ;
\draw [color={rgb, 255:red, 208; green, 2; blue, 27 }  ,draw opacity=1 ]   (194.36,76) ;
\draw [shift={(194.36,76)}, rotate = 0] [color={rgb, 255:red, 208; green, 2; blue, 27 }  ,draw opacity=1 ][fill={rgb, 255:red, 208; green, 2; blue, 27 }  ,fill opacity=1 ][line width=0.75]      (0, 0) circle [x radius= 3.35, y radius= 3.35]   ;
\draw [color={rgb, 255:red, 0; green, 0; blue, 0 }  ,draw opacity=1 ]   (129.36,108) ;
\draw [shift={(129.36,108)}, rotate = 0] [color={rgb, 255:red, 0; green, 0; blue, 0 }  ,draw opacity=1 ][fill={rgb, 255:red, 0; green, 0; blue, 0 }  ,fill opacity=1 ][line width=0.75]      (0, 0) circle [x radius= 3.35, y radius= 3.35]   ;
\draw [color={rgb, 255:red, 208; green, 2; blue, 27 }  ,draw opacity=1 ]   (99.51,75) ;
\draw [shift={(99.51,75)}, rotate = 0] [color={rgb, 255:red, 208; green, 2; blue, 27 }  ,draw opacity=1 ][fill={rgb, 255:red, 208; green, 2; blue, 27 }  ,fill opacity=1 ][line width=0.75]      (0, 0) circle [x radius= 3.35, y radius= 3.35]   ;
\draw [color={rgb, 255:red, 208; green, 2; blue, 27 }  ,draw opacity=1 ]   (121.51,40) ;
\draw [shift={(121.51,40)}, rotate = 0] [color={rgb, 255:red, 208; green, 2; blue, 27 }  ,draw opacity=1 ][fill={rgb, 255:red, 208; green, 2; blue, 27 }  ,fill opacity=1 ][line width=0.75]      (0, 0) circle [x radius= 3.35, y radius= 3.35]   ;
\draw [color={rgb, 255:red, 208; green, 2; blue, 27 }  ,draw opacity=1 ]   (152.33,40) ;
\draw [shift={(152.33,40)}, rotate = 0] [color={rgb, 255:red, 208; green, 2; blue, 27 }  ,draw opacity=1 ][fill={rgb, 255:red, 208; green, 2; blue, 27 }  ,fill opacity=1 ][line width=0.75]      (0, 0) circle [x radius= 3.35, y radius= 3.35]   ;
\draw [color={rgb, 255:red, 208; green, 2; blue, 27 }  ,draw opacity=1 ]   (197.33,87) ;
\draw [shift={(197.33,87)}, rotate = 0] [color={rgb, 255:red, 208; green, 2; blue, 27 }  ,draw opacity=1 ][fill={rgb, 255:red, 208; green, 2; blue, 27 }  ,fill opacity=1 ][line width=0.75]      (0, 0) circle [x radius= 3.35, y radius= 3.35]   ;
\draw [color={rgb, 255:red, 208; green, 2; blue, 27 }  ,draw opacity=1 ]   (196.36,120) ;
\draw [shift={(196.36,120)}, rotate = 0] [color={rgb, 255:red, 208; green, 2; blue, 27 }  ,draw opacity=1 ][fill={rgb, 255:red, 208; green, 2; blue, 27 }  ,fill opacity=1 ][line width=0.75]      (0, 0) circle [x radius= 3.35, y radius= 3.35]   ;
\draw [color={rgb, 255:red, 0; green, 0; blue, 0 }  ,draw opacity=1 ]   (166.51,87) ;
\draw [shift={(166.51,87)}, rotate = 0] [color={rgb, 255:red, 0; green, 0; blue, 0 }  ,draw opacity=1 ][fill={rgb, 255:red, 0; green, 0; blue, 0 }  ,fill opacity=1 ][line width=0.75]      (0, 0) circle [x radius= 3.35, y radius= 3.35]   ;
\draw    (176.33,148) -- (193.22,173.34) ;
\draw [shift={(194.33,175)}, rotate = 236.31] [color={rgb, 255:red, 0; green, 0; blue, 0 }  ][line width=0.75]    (10.93,-3.29) .. controls (6.95,-1.4) and (3.31,-0.3) .. (0,0) .. controls (3.31,0.3) and (6.95,1.4) .. (10.93,3.29)   ;
\draw    (336.33,144) -- (353.22,169.34) ;
\draw [shift={(354.33,171)}, rotate = 236.31] [color={rgb, 255:red, 0; green, 0; blue, 0 }  ][line width=0.75]    (10.93,-3.29) .. controls (6.95,-1.4) and (3.31,-0.3) .. (0,0) .. controls (3.31,0.3) and (6.95,1.4) .. (10.93,3.29)   ;
\draw    (270,145) -- (255.19,176.19) ;
\draw [shift={(254.33,178)}, rotate = 295.4] [color={rgb, 255:red, 0; green, 0; blue, 0 }  ][line width=0.75]    (10.93,-3.29) .. controls (6.95,-1.4) and (3.31,-0.3) .. (0,0) .. controls (3.31,0.3) and (6.95,1.4) .. (10.93,3.29)   ;
\draw    (428.33,147) -- (418.02,175.12) ;
\draw [shift={(417.33,177)}, rotate = 290.14] [color={rgb, 255:red, 0; green, 0; blue, 0 }  ][line width=0.75]    (10.93,-3.29) .. controls (6.95,-1.4) and (3.31,-0.3) .. (0,0) .. controls (3.31,0.3) and (6.95,1.4) .. (10.93,3.29)   ;
\draw    (137.51,103) -- (167.36,136) ;
\draw    (137.51,103) -- (168.33,103) ;
\draw [color={rgb, 255:red, 0; green, 0; blue, 0 }  ,draw opacity=1 ]   (168.33,103) ;
\draw [shift={(168.33,103)}, rotate = 0] [color={rgb, 255:red, 0; green, 0; blue, 0 }  ,draw opacity=1 ][fill={rgb, 255:red, 0; green, 0; blue, 0 }  ,fill opacity=1 ][line width=0.75]      (0, 0) circle [x radius= 3.35, y radius= 3.35]   ;
\draw [color={rgb, 255:red, 208; green, 2; blue, 27 }  ,draw opacity=1 ]   (167.36,136) ;
\draw [shift={(167.36,136)}, rotate = 0] [color={rgb, 255:red, 208; green, 2; blue, 27 }  ,draw opacity=1 ][fill={rgb, 255:red, 208; green, 2; blue, 27 }  ,fill opacity=1 ][line width=0.75]      (0, 0) circle [x radius= 3.35, y radius= 3.35]   ;
\draw [color={rgb, 255:red, 208; green, 2; blue, 27 }  ,draw opacity=1 ]   (137.51,103) ;
\draw [shift={(137.51,103)}, rotate = 0] [color={rgb, 255:red, 208; green, 2; blue, 27 }  ,draw opacity=1 ][fill={rgb, 255:red, 208; green, 2; blue, 27 }  ,fill opacity=1 ][line width=0.75]      (0, 0) circle [x radius= 3.35, y radius= 3.35]   ;
\draw    (469.38,62.88) -- (430.32,62.7) ;
\draw [color={rgb, 255:red, 208; green, 2; blue, 27 }  ,draw opacity=1 ]   (430.32,62.7) ;
\draw [shift={(430.32,62.7)}, rotate = 0] [color={rgb, 255:red, 208; green, 2; blue, 27 }  ,draw opacity=1 ][fill={rgb, 255:red, 208; green, 2; blue, 27 }  ,fill opacity=1 ][line width=0.75]      (0, 0) circle [x radius= 3.35, y radius= 3.35]   ;
\draw [color={rgb, 255:red, 0; green, 0; blue, 0 }  ,draw opacity=1 ]   (469.38,62.88) ;
\draw [shift={(469.38,62.88)}, rotate = 0] [color={rgb, 255:red, 0; green, 0; blue, 0 }  ,draw opacity=1 ][fill={rgb, 255:red, 0; green, 0; blue, 0 }  ,fill opacity=1 ][line width=0.75]      (0, 0) circle [x radius= 3.35, y radius= 3.35]   ;
\draw    (469.96,85.87) -- (430.89,85.69) ;
\draw [color={rgb, 255:red, 208; green, 2; blue, 27 }  ,draw opacity=1 ]   (469.96,85.87) ;
\draw [shift={(469.96,85.87)}, rotate = 0] [color={rgb, 255:red, 208; green, 2; blue, 27 }  ,draw opacity=1 ][fill={rgb, 255:red, 208; green, 2; blue, 27 }  ,fill opacity=1 ][line width=0.75]      (0, 0) circle [x radius= 3.35, y radius= 3.35]   ;
\draw [color={rgb, 255:red, 0; green, 0; blue, 0 }  ,draw opacity=1 ]   (430.89,85.69) ;
\draw [shift={(430.89,85.69)}, rotate = 0] [color={rgb, 255:red, 0; green, 0; blue, 0 }  ,draw opacity=1 ][fill={rgb, 255:red, 0; green, 0; blue, 0 }  ,fill opacity=1 ][line width=0.75]      (0, 0) circle [x radius= 3.35, y radius= 3.35]   ;
\draw    (470.51,107.87) -- (431.44,107.69) ;
\draw [color={rgb, 255:red, 208; green, 2; blue, 27 }  ,draw opacity=1 ]   (431.44,107.69) ;
\draw [shift={(431.44,107.69)}, rotate = 0] [color={rgb, 255:red, 208; green, 2; blue, 27 }  ,draw opacity=1 ][fill={rgb, 255:red, 208; green, 2; blue, 27 }  ,fill opacity=1 ][line width=0.75]      (0, 0) circle [x radius= 3.35, y radius= 3.35]   ;
\draw [color={rgb, 255:red, 208; green, 2; blue, 27 }  ,draw opacity=1 ]   (470.51,107.87) ;
\draw [shift={(470.51,107.87)}, rotate = 0] [color={rgb, 255:red, 208; green, 2; blue, 27 }  ,draw opacity=1 ][fill={rgb, 255:red, 208; green, 2; blue, 27 }  ,fill opacity=1 ][line width=0.75]      (0, 0) circle [x radius= 3.35, y radius= 3.35]   ;
\draw   (511.31,81.16) .. controls (512.16,115.01) and (485.43,143.14) .. (451.61,143.98) .. controls (417.79,144.83) and (389.69,118.07) .. (388.84,84.21) .. controls (388,50.36) and (414.73,22.23) .. (448.55,21.39) .. controls (482.37,20.54) and (510.47,47.3) .. (511.31,81.16) -- cycle ;
\draw  [dash pattern={on 0.84pt off 2.51pt}]  (187.62,216.03) -- (186.75,255.09) ;
\draw [color={rgb, 255:red, 208; green, 2; blue, 27 }  ,draw opacity=1 ]   (186.75,255.09) ;
\draw [shift={(186.75,255.09)}, rotate = 0] [color={rgb, 255:red, 208; green, 2; blue, 27 }  ,draw opacity=1 ][fill={rgb, 255:red, 208; green, 2; blue, 27 }  ,fill opacity=1 ][line width=0.75]      (0, 0) circle [x radius= 3.35, y radius= 3.35]   ;
\draw [color={rgb, 255:red, 0; green, 0; blue, 0 }  ,draw opacity=1 ]   (187.62,216.03) ;
\draw [shift={(187.62,216.03)}, rotate = 0] [color={rgb, 255:red, 0; green, 0; blue, 0 }  ,draw opacity=1 ][fill={rgb, 255:red, 0; green, 0; blue, 0 }  ,fill opacity=1 ][line width=0.75]      (0, 0) circle [x radius= 3.35, y radius= 3.35]   ;
\draw  [dash pattern={on 0.84pt off 2.51pt}]  (210.62,215.86) -- (209.75,254.92) ;
\draw [color={rgb, 255:red, 208; green, 2; blue, 27 }  ,draw opacity=1 ]   (210.62,215.86) ;
\draw [shift={(210.62,215.86)}, rotate = 0] [color={rgb, 255:red, 208; green, 2; blue, 27 }  ,draw opacity=1 ][fill={rgb, 255:red, 208; green, 2; blue, 27 }  ,fill opacity=1 ][line width=0.75]      (0, 0) circle [x radius= 3.35, y radius= 3.35]   ;
\draw [color={rgb, 255:red, 0; green, 0; blue, 0 }  ,draw opacity=1 ]   (209.75,254.92) ;
\draw [shift={(209.75,254.92)}, rotate = 0] [color={rgb, 255:red, 0; green, 0; blue, 0 }  ,draw opacity=1 ][fill={rgb, 255:red, 0; green, 0; blue, 0 }  ,fill opacity=1 ][line width=0.75]      (0, 0) circle [x radius= 3.35, y radius= 3.35]   ;
\draw  [dash pattern={on 0.84pt off 2.51pt}]  (232.62,215.7) -- (231.75,254.76) ;
\draw [color={rgb, 255:red, 208; green, 2; blue, 27 }  ,draw opacity=1 ]   (231.75,254.76) ;
\draw [shift={(231.75,254.76)}, rotate = 0] [color={rgb, 255:red, 208; green, 2; blue, 27 }  ,draw opacity=1 ][fill={rgb, 255:red, 208; green, 2; blue, 27 }  ,fill opacity=1 ][line width=0.75]      (0, 0) circle [x radius= 3.35, y radius= 3.35]   ;
\draw [color={rgb, 255:red, 208; green, 2; blue, 27 }  ,draw opacity=1 ]   (232.62,215.7) ;
\draw [shift={(232.62,215.7)}, rotate = 0] [color={rgb, 255:red, 208; green, 2; blue, 27 }  ,draw opacity=1 ][fill={rgb, 255:red, 208; green, 2; blue, 27 }  ,fill opacity=1 ][line width=0.75]      (0, 0) circle [x radius= 3.35, y radius= 3.35]   ;
\draw   (220.63,177.43) .. controls (254.49,177.18) and (282.14,204.41) .. (282.39,238.23) .. controls (282.64,272.06) and (255.39,299.69) .. (221.53,299.94) .. controls (187.66,300.19) and (160.01,272.96) .. (159.76,239.13) .. controls (159.52,205.3) and (186.77,177.68) .. (220.63,177.43) -- cycle ;

\draw (396,230) node [anchor=north west][inner sep=0.75pt]  [font=\large] [align=left] {$\displaystyle \emptyset $};
\draw (251,231) node [anchor=north west][inner sep=0.75pt]  [font=\large] [align=left] {$\displaystyle \emptyset $};
\end{tikzpicture}
\]

However, if we apply the same method as we did in \ref{algorithm-coloring}, patching together the local solutions, we do not necessarily have a solution for the bigger graph. For example, consider the following set of local solutions that agree on the intersections.

\[
\tikzset{every picture/.style={line width=0.75pt}} 
\begin{tikzpicture}[x=0.65pt,y=0.65pt,yscale=-1,xscale=1]

\draw    (302.63,55.79) -- (303.53,109.58) ;
\draw [color={rgb, 255:red, 208; green, 2; blue, 27 }  ,draw opacity=1 ]   (303.53,109.58) ;
\draw [shift={(303.53,109.58)}, rotate = 0] [color={rgb, 255:red, 208; green, 2; blue, 27 }  ,draw opacity=1 ][fill={rgb, 255:red, 208; green, 2; blue, 27 }  ,fill opacity=1 ][line width=0.75]      (0, 0) circle [x radius= 3.35, y radius= 3.35]   ;
\draw [color={rgb, 255:red, 0; green, 0; blue, 0 }  ,draw opacity=1 ]   (302.63,55.79) ;
\draw [shift={(302.63,55.79)}, rotate = 0] [color={rgb, 255:red, 0; green, 0; blue, 0 }  ,draw opacity=1 ][fill={rgb, 255:red, 0; green, 0; blue, 0 }  ,fill opacity=1 ][line width=0.75]      (0, 0) circle [x radius= 3.35, y radius= 3.35]   ;
\draw   (323.82,237.68) .. controls (323.82,203.82) and (351.25,176.37) .. (385.08,176.37) .. controls (418.91,176.37) and (446.33,203.82) .. (446.33,237.68) .. controls (446.33,271.55) and (418.91,299) .. (385.08,299) .. controls (351.25,299) and (323.82,271.55) .. (323.82,237.68) -- cycle ;
\draw   (92.82,82.68) .. controls (92.82,48.82) and (120.25,21.37) .. (154.08,21.37) .. controls (187.91,21.37) and (215.33,48.82) .. (215.33,82.68) .. controls (215.33,116.55) and (187.91,144) .. (154.08,144) .. controls (120.25,144) and (92.82,116.55) .. (92.82,82.68) -- cycle ;
\draw   (302.63,21.43) .. controls (336.49,21.18) and (364.14,48.41) .. (364.39,82.23) .. controls (364.64,116.06) and (337.39,143.69) .. (303.53,143.94) .. controls (269.66,144.19) and (242.01,116.96) .. (241.76,83.13) .. controls (241.52,49.3) and (268.77,21.68) .. (302.63,21.43) -- cycle ;
\draw   (159.82,237.68) .. controls (159.82,203.82) and (187.25,176.37) .. (221.08,176.37) .. controls (254.91,176.37) and (282.33,203.82) .. (282.33,237.68) .. controls (282.33,271.55) and (254.91,299) .. (221.08,299) .. controls (187.25,299) and (159.82,271.55) .. (159.82,237.68) -- cycle ;
\draw    (176.33,148) -- (193.22,173.34) ;
\draw [shift={(194.33,175)}, rotate = 236.31] [color={rgb, 255:red, 0; green, 0; blue, 0 }  ][line width=0.75]    (10.93,-3.29) .. controls (6.95,-1.4) and (3.31,-0.3) .. (0,0) .. controls (3.31,0.3) and (6.95,1.4) .. (10.93,3.29)   ;
\draw    (336.33,144) -- (353.22,169.34) ;
\draw [shift={(354.33,171)}, rotate = 236.31] [color={rgb, 255:red, 0; green, 0; blue, 0 }  ][line width=0.75]    (10.93,-3.29) .. controls (6.95,-1.4) and (3.31,-0.3) .. (0,0) .. controls (3.31,0.3) and (6.95,1.4) .. (10.93,3.29)   ;
\draw    (270,145) -- (255.19,176.19) ;
\draw [shift={(254.33,178)}, rotate = 295.4] [color={rgb, 255:red, 0; green, 0; blue, 0 }  ][line width=0.75]    (10.93,-3.29) .. controls (6.95,-1.4) and (3.31,-0.3) .. (0,0) .. controls (3.31,0.3) and (6.95,1.4) .. (10.93,3.29)   ;
\draw    (428.33,147) -- (418.02,175.12) ;
\draw [shift={(417.33,177)}, rotate = 290.14] [color={rgb, 255:red, 0; green, 0; blue, 0 }  ][line width=0.75]    (10.93,-3.29) .. controls (6.95,-1.4) and (3.31,-0.3) .. (0,0) .. controls (3.31,0.3) and (6.95,1.4) .. (10.93,3.29)   ;
\draw    (118.51,55) -- (183.22,118.67) ;
\draw    (118.51,55) -- (185.33,55) ;
\draw [color={rgb, 255:red, 0; green, 0; blue, 0 }  ,draw opacity=1 ]   (185.33,55) ;
\draw [shift={(185.33,55)}, rotate = 0] [color={rgb, 255:red, 0; green, 0; blue, 0 }  ,draw opacity=1 ][fill={rgb, 255:red, 0; green, 0; blue, 0 }  ,fill opacity=1 ][line width=0.75]      (0, 0) circle [x radius= 3.35, y radius= 3.35]   ;
\draw [color={rgb, 255:red, 208; green, 2; blue, 27 }  ,draw opacity=1 ]   (183.22,118.67) ;
\draw [shift={(183.22,118.67)}, rotate = 0] [color={rgb, 255:red, 208; green, 2; blue, 27 }  ,draw opacity=1 ][fill={rgb, 255:red, 208; green, 2; blue, 27 }  ,fill opacity=1 ][line width=0.75]      (0, 0) circle [x radius= 3.35, y radius= 3.35]   ;
\draw [color={rgb, 255:red, 208; green, 2; blue, 27 }  ,draw opacity=1 ]   (118.51,55) ;
\draw [shift={(118.51,55)}, rotate = 0] [color={rgb, 255:red, 208; green, 2; blue, 27 }  ,draw opacity=1 ][fill={rgb, 255:red, 208; green, 2; blue, 27 }  ,fill opacity=1 ][line width=0.75]      (0, 0) circle [x radius= 3.35, y radius= 3.35]   ;
\draw   (511.31,81.16) .. controls (512.16,115.01) and (485.43,143.14) .. (451.61,143.98) .. controls (417.79,144.83) and (389.69,118.07) .. (388.84,84.21) .. controls (388,50.36) and (414.73,22.23) .. (448.55,21.39) .. controls (482.37,20.54) and (510.47,47.3) .. (511.31,81.16) -- cycle ;
\draw    (476.98,82.65) -- (423.18,82.72) ;
\draw [color={rgb, 255:red, 208; green, 2; blue, 27 }  ,draw opacity=1 ]   (476.98,82.65) ;
\draw [shift={(476.98,82.65)}, rotate = 0] [color={rgb, 255:red, 208; green, 2; blue, 27 }  ,draw opacity=1 ][fill={rgb, 255:red, 208; green, 2; blue, 27 }  ,fill opacity=1 ][line width=0.75]      (0, 0) circle [x radius= 3.35, y radius= 3.35]   ;
\draw [color={rgb, 255:red, 0; green, 0; blue, 0 }  ,draw opacity=1 ]   (423.18,82.72) ;
\draw [shift={(423.18,82.72)}, rotate = 0] [color={rgb, 255:red, 0; green, 0; blue, 0 }  ,draw opacity=1 ][fill={rgb, 255:red, 0; green, 0; blue, 0 }  ,fill opacity=1 ][line width=0.75]      (0, 0) circle [x radius= 3.35, y radius= 3.35]   ;
\draw [color={rgb, 255:red, 208; green, 2; blue, 27 }  ,draw opacity=1 ]   (221.53,264.58) ;
\draw [shift={(221.53,264.58)}, rotate = 0] [color={rgb, 255:red, 208; green, 2; blue, 27 }  ,draw opacity=1 ][fill={rgb, 255:red, 208; green, 2; blue, 27 }  ,fill opacity=1 ][line width=0.75]      (0, 0) circle [x radius= 3.35, y radius= 3.35]   ;
\draw [color={rgb, 255:red, 0; green, 0; blue, 0 }  ,draw opacity=1 ]   (220.63,210.79) ;
\draw [shift={(220.63,210.79)}, rotate = 0] [color={rgb, 255:red, 0; green, 0; blue, 0 }  ,draw opacity=1 ][fill={rgb, 255:red, 0; green, 0; blue, 0 }  ,fill opacity=1 ][line width=0.75]      (0, 0) circle [x radius= 3.35, y radius= 3.35]   ;

\draw (376,231) node [anchor=north west][inner sep=0.75pt]  [font=\large] [align=left] {$\displaystyle \emptyset $};
\end{tikzpicture}
\]
These do not patch together into a global solution because, although they hit all the edges, the vertex cover they form has size strictly greater than $2$.

\[
\tikzset{every picture/.style={line width=0.75pt}} 
\begin{tikzpicture}[x=0.65pt,y=0.65pt,yscale=-1,xscale=1]

\draw    (321.33,94) -- (415.33,94) ;
\draw    (321.33,94) -- (318.33,185) ;
\draw    (226.33,94) -- (321.33,94) ;
\draw [shift={(321.33,94)}, rotate = 0] [color={rgb, 255:red, 0; green, 0; blue, 0 }  ][fill={rgb, 255:red, 0; green, 0; blue, 0 }  ][line width=0.75]      (0, 0) circle [x radius= 3.35, y radius= 3.35]   ;
\draw    (226.33,94) -- (318.33,185) ;
\draw [shift={(226.33,94)}, rotate = 44.69] [color={rgb, 255:red, 0; green, 0; blue, 0 }  ][fill={rgb, 255:red, 0; green, 0; blue, 0 }  ][line width=0.75]      (0, 0) circle [x radius= 3.35, y radius= 3.35]   ;
\draw [color={rgb, 255:red, 208; green, 2; blue, 27 }  ,draw opacity=1 ]   (226.33,94) ;
\draw [shift={(226.33,94)}, rotate = 0] [color={rgb, 255:red, 208; green, 2; blue, 27 }  ,draw opacity=1 ][fill={rgb, 255:red, 208; green, 2; blue, 27 }  ,fill opacity=1 ][line width=0.75]      (0, 0) circle [x radius= 3.35, y radius= 3.35]   ;
\draw [color={rgb, 255:red, 208; green, 2; blue, 27 }  ,draw opacity=1 ]   (318.33,185) ;
\draw [shift={(318.33,185)}, rotate = 0] [color={rgb, 255:red, 208; green, 2; blue, 27 }  ,draw opacity=1 ][fill={rgb, 255:red, 208; green, 2; blue, 27 }  ,fill opacity=1 ][line width=0.75]      (0, 0) circle [x radius= 3.35, y radius= 3.35]   ;
\draw [color={rgb, 255:red, 208; green, 2; blue, 27 }  ,draw opacity=1 ]   (415.33,94) ;
\draw [shift={(415.33,94)}, rotate = 0] [color={rgb, 255:red, 208; green, 2; blue, 27 }  ,draw opacity=1 ][fill={rgb, 255:red, 208; green, 2; blue, 27 }  ,fill opacity=1 ][line width=0.75]      (0, 0) circle [x radius= 3.35, y radius= 3.35]   ;
\end{tikzpicture}
\]
}

\body{The naive algorithm we used for \textsc{GraphColoring} no longer applies for \textsc{VertexCover} and this is a well-known fact in algorithmics~\cite{cygan2015parameterized}. In this second problem, we found an obstruction: the size restriction of the vertex cover is local information that does not get carried over from the local parts to the global instance. Knowing these obstructions becomes important to be able to construct dynamic programming algorithms that \textit{do} work since, to do so, one needs to be able to account for and track such local obstructions. In the following section, we will show how to use category-theoretic tools to reason about all such obstructions. To ground our abstraction firmly in a practical example, we will consider \textsc{VertexCover} as a case study. However, we wish to emphasize from the beginning that all the methods we will employ generalize to any suitable computational problem.
}

\section{Presheaves and Čech Cohomology: \textsc{VertexCover} as a Case Study}

\body{
In this section, we will view \textsc{VertexCover} through a category-theoretic lens. To aid readers not familiar with category theory and as a warm-up for what follows in this paper, the constructions in this section are purposefully presented in more familiar set-theoretic terms. In Section~\ref{sec: abstract tools and more examples}, we will employ slicker categorical devices to exhibit more examples and, in doing so, we will also restate some of the constructions in this section in more abstract terms.
}

\subsection{Warmup: \textsc{VertexCover} as a Presheaf}

\begin{definition}
    Given a graph $G$, we say that a vertex subset $A \subseteq V(G)$ is a \define{vertex cover} of $G$ if $G - A$ is edgeless (equivalently, if for every edge $\alpha =\{u,v\} \in E(G)$, $u \in A$ or $v \in A$). We say that $A$ is a \textbf{minimum vertex cover of $G$} if, for every vertex cover $B \subseteq V(G)$, $|A| \leq |B|$.
\end{definition}

\body{
As far as vertex covers are concerned, the computational task at hand is that of finding a minimum vertex cover in a given input graph $G$. As is standard, this is stated as a decision problem as follows.
}

\vspace{0.4cm}

\fbox{\begin{minipage}{41em}
$\textsc{VertexCover}$

\textbf{Input:} A graph $G$ and an integer $k$.

\textbf{Task:} Decide if $G$ admits a vertex cover of cardinality at most $k$.
\end{minipage}}

\body{
The \define{vertex cover number} (i.e. the minimum size of a vertex cover in a given graph), is monotone under the subgraph relation, that is, the vertex cover number of a subgraph cannot be larger than that of its ambient graph. This suggests that certificates of \textsc{VertexCover} may be well-behaved with respect to monomorphisms\footnote{A \emph{monomorphism of graphs} is a homomorphism of graphs witnessing the inclusion of subgraph, i.e. injective on vertices and edges.}. Indeed, a vertex cover of a graph $G$ induces a vertex cover of any of its subgraphs. Here we point out that this is enough information to define a \textit{presheaf}: i.e. a functorial assignment of sets of certificates to each graph equipped with maps that relate the certificates of one graph to the those of another. To do so, we will first recall some basic notions from category theory. 
}

\begin{definition} \label{def: category}
    A \define{category} $\cat{C}$ consists of a collection $\cat{C}_0$ of objects and a collection $\cat{C}_1$ of morphisms (arrows) with the following operations:
    \begin{itemize}
        \item For each morphism $f$ of $\cat{C}_1$, we assign an object $a = dom(f)$ of $\cat{C}_0$ to be the domain of $f$ and an object $b = cod(f)$ of $\cat{C}_0$ to be the codomain of $f$; we use the notation $f \colon a \to b$.
        
        \item For each pair $(f,g)$ of morphisms of $\cat{C}_1$ such that $dom(g) = cod(f)$, we assign a morphism $g \circ f$ called composition of $f$ and $g$, where $dom(g \circ f) = dom(f)$ and $cod(g \circ f) = cod (g)$, that satisfy the
        
        \textit{Associative law}: given morphisms of $\cat{C}_1$
        \[\begin{tikzcd}
        a \arrow[r, "f"] & b \arrow[r, "g"] & c \arrow[r, "h"] & d
        \end{tikzcd}\]
        we have $(h \circ g) \circ f = h \circ (g \circ f)$.

        \item For each object $a$ of $\cat{C}_0$, we assign a morphism $id_a : a \to a$ of $\cat{C}_1$ that satisfy the 
        
        \textit{Identity law}: given morphisms of $\cat{C}_1$
        \[\begin{tikzcd}
        a \arrow[r, "f"] & b \arrow[r, "g"] & c
        \end{tikzcd}\]  
        we have
        \[ 1_b \circ f = f,~~~~~ g \circ 1_b = g. \]
    \end{itemize}
\end{definition}

\begin{definition}\label{def: preshaf}
    A \define{presheaf of sets} over a small category $\cat{C}$ is a \textit{contravariant functor} \[F \colon \cat{C}^{\op} \to \Set.\]
    Spelling this out, a presheaf of sets $F$ is an assignment of a \textit{set} $F(x)$ to each object $x \in \cat{C}$ and a \textit{function} $F(f) \colon F(y) \to F(x)$ to each morphism $f\colon x \to y$ in $\cat{C}$ such that the following two conditions hold: 
    \begin{enumerate}
        \item \textit{$F$ preserves composition} (i.e. if $x \xrightarrow{f} y$ and $y \xrightarrow{g} z$ are two morphisms in $\cat{C}$, then $F(g\circ f) = F(f) \circ F(g)$) and 
        \item \textit{$F$ preserves identities} (i.e. $F(\id_x) = \id_{F(x)}$ for all objects $x$).
    \end{enumerate}
    An element $A \in F(x)$ is said to be a \define{section of $F$ at $x$} and, accordingly, $F(x)$ is referred to as the \define{set of sections} at $x$. The functions $F(f)$ are called \define{restriction maps}. 
\end{definition}

\body{
Staying as concrete as possible and in an attempt to bridge graph-theoretic and category-theoretic language, throughout most of this article we will be restricting our attention to the posetal category\footnote{The reader knowledgeable in sheaf theory should be able to imagine how the story goes when one replaces $\sub(G)$ with another suitable \textit{site}.}~$\sub(G)$ where $G$ is some given graph. This category, defined formally below, has subgraphs of $G$ as objects and injective graph homomorphisms as arrows. 
}

\begin{definition}
    Given a graph $G$, the category $\sub(G)$ has subgraphs of $G$ as its objects. We will say that there is an arrow $f \colon H \to K$ between two subgraphs of $G$ if $f$ is a homomorphism of graphs and the following diagram is commutative. 

    \[\begin{tikzcd}[sep=small]
    H \arrow[rr, "f"] \arrow[rd, hook] &   & K \arrow[ld, hook] \\
                                       & G &                    
    \end{tikzcd} \]
\end{definition}

\body{
It is not difficult to see that if $f$ makes the above diagram commute, then $f$ is injective. So, to give an arrow between $H,K$ subgraphs of $G$ is equivalent to say $H \subseteq K$ -- in particular, there is at most one such map.
}

\begin{figure}
    \centering
    \include{figures/vertex-cover-presheaf}
    \caption{The presheaf with domain $\sub(K_2)$ associated to the problem of deciding if a graph admits a vertex cover of cardinality at most $1$.}
    \label{fig: vertex cover presheaf}
\end{figure}

\body{
A sweeping conceptual point we wish to make here is that, given any computational problem $F$, it is fruitful to consider morphisms with respect to which the following assignment is functorial: \[\text{instance } X \mapsto \text{ all certificates of } F \text{ on } X.\]
This is the case for \textsc{VertexCover}: there is a presheaf $\Va \colon \sub(G)^{\op} \to \Set$ which sends each subgraph of $G$ to the set of all of its vertex covers. This yields the assignment
\begin{align*}
 \mathcal{V} \colon \sub(G)^{\cat{op}} &\longrightarrow  \Set \\
 H \quad \quad &\longmapsto  \{A \subseteq V(H) : A ~\text{is a vertex cover of }H\}
\end{align*}
equipped with the following restriction maps: given $f \colon H \hookrightarrow K$, we put
\begin{align*}
 \Va(f) \colon \Va(K) &\longrightarrow  \Va(H) \\
 A \quad &\longmapsto  A\restrict_H := \{v \in A: v \in V(H)\}.
\end{align*}
This construction is well defined and indeed yields a presheaf (Lemma~\ref{vertex cover functor is a presheaf} below). Notice that, for now, we put no restrictions on the size of the vertex covers; this will be done later (see Figure~\ref{fig: vertex cover presheaf}) when we consider the presheaf $\Va_{\leq k}$ (e.g. in Proposition~\ref{prop: vert cover < k is not sheaf}).}

\begin{lemma}\label{vertex cover functor is a presheaf}
    $\Va$ is a presheaf.
\end{lemma}
\begin{proof}

We need to prove that given $f \colon H \hookrightarrow K$ and $g \colon K \hookrightarrow J$, one has  
$\mathcal{V}(g \circ f) = \mathcal{V}(f) \circ \mathcal{V}(g)$ 
and that
$\mathcal{V}(id_H) = id_{\mathcal{V}(H)}$.

For the first one, given $A$ a vertex cover of $J$, we have 
$\mathcal{V}(g \circ f)(A) = A\restrict_H$ and $[\mathcal{V}(f) \circ \mathcal{V}(g)](A) = (A\restrict_{K})\restrict_H$. Note that 
$$(A\restrict_{K})\restrict_H = \{v \in V(H): v \in A\restrict_{K}\} = \{v \in V(H) : v \in K \wedge v \in A \}.$$ 
Since $H \subseteq K$, then 
$$\{v \in V(H) : v \in V(K) \wedge v \in A \} = \{v \in V(H) : v \in A\} = A\restrict_H.$$
Hence, we have that $\mathcal{V}(g \circ f) = \mathcal{V}(f) \circ \mathcal{V}(g)$.

Finally, we also have that $\mathcal{V}(id_H) = id_{\mathcal{V}(H)}$ simply because given $A$ a vertex cover of $H$, $A \subseteq V(H)$, and therefore
$$A\restrict_H = \{v \in V(H) : v \in A\} = A.$$
\end{proof}

\body{
The restriction maps of a presheaf allow us to systematically track how global certificates can be converted into local ones. Notice, however, that, since we have \textit{not placed any restriction} on the size of the vertex cover in the presheaf above, one can actually go the other way: given any collection of vertex covers of the subgraphs, if these vertex covers agree on intersections, then they can be patched together \textit{uniquely} to form a global vertex cover. Presheaves that satisfy this condition are known as \textit{sheaves} (defined below). We will show (Lemma~\ref{prop: V is a sheaf}) that $\Va$ is an example of such a construction; another example is the $n$-coloring functor (as we alluded to earlier in \ref{algorithm-coloring}, without proof).  
}

\begin{definition} \label{matching family}
    Fix a presheaf $F \colon \sub(G)^{\op} \to \Set$ and a collection of subgraphs $\Ua := (H_i)_{i \in I}$ which cover a subgraph $H$ of $G$ (i.e. such that $\bigcup_i H_i = H$). A \define{matching family} of $F$ on $\Ua$ is  family $\{A_i \in F(H_i)\}_{i \in I}$ of local sections that agree on the intersections, that is, for all $i,j \in I$ we have 
    $$A_i\restrict_{H_i \cap H_j} = A_j\restrict_{H_i \cap H_j}.$$ 
\end{definition}

\body{
Notice how every global section $A \in F(H)$ induces a matching family on any cover $(H_i)_{i \in I}$ of $H$, simply by defining $A_i := A \restrict_{H_i}$.
A presheaf is a sheaf when all matching families arise uniquely in such a way:
}

\begin{definition}\label{sheaf set}
    We say that a presheaf $F\colon \sub(G)^{\op} \to \cat{Set}$ is a \define{sheaf} if it satisfies the following \textit{sheaf condition}:
    \begin{quote}
        For any matching family $\{A_i \in F(H_i)\}_{i \in I}$ of $F$ on a cover $(H_i)_{i \in I}$ of any given $H \in \sub(G)$, there exists a unique global section $A \in F(H)$ that agrees with the local ones, i.e., $A\restrict_{H_i} = A_i$ for all $i \in I$.
    \end{quote}
    
    We call $A$ the \define{amalgamation} of the matching family $\{A_i\}_{i \in I}$.
\end{definition}

\begin{proposition}\label{prop: V is a sheaf}
    $\Va$ is a sheaf.
\end{proposition}
\begin{proof}
Suppose that we have $H \in \sub(G)$ and a cover of graphs $\{H_i\}_{i \in I}$ for $H$. Moreover, suppose that we have a family $\{A_i\}_{i \in I}$ of vertex covers $A_i \in \mathcal{V}(H_i)$ that agree on the intersections, that is, for all $i,j \in I$ we have 
$$A_i\restrict_{H_i \cap H_j} = A_j\restrict_{H_i \cap H_j}.$$ Let's see that $\bigcup_{i \in I} A_i$ is the unique vertex cover of $H$ that agrees with the other vertex covers.

First, we note that $A = \bigcup_{i \in I} A_i$ is a vertex cover of $H$. Suppose not, then there exists $\alpha$ an edge in $H - A$. Since $H = \bigcup_{i \in I} H_i$, there exists a $i \in I$ such that $\alpha \in E(H_i)$. Now, because the vertices of $\alpha$ are not in $A$ and $A$ is the union $\bigcup_{i \in I} A_i$, we have that these vertices are also not in $A_i$. Hence $\alpha \in E(H_i - A_i)$, contradicting the fact that $A_i$ is a vertex cover of $H_i$.

The union $A = \bigcup_{i \in I} A_i$ satisfies $A\restrict_{H_i} = A_i$ for all $i \in I$, so let's show that it is the unique vertex cover with this property. Suppose $B$ is a vertex cover of $H$ satisfying the property. Since $A_i = B\restrict_{H_i}$ and $B\restrict_{H_i} \subseteq B$, for all $i$, we have $A = \bigcup_{i \in I} A_i \subseteq B$. Now, if $v \in B \subseteq V(H)$, since $H = \bigcup_{i \in I} H_i$, there exists a $i \in I$ such that $v \in V(H_i)$. We then have $v \in A_i$, because $B\restrict_{H_i} = A_i$. Therefore, $B \subseteq \bigcup_{i \in I} A_i$, and we proved that $B = \bigcup_{i \in I} A_i = A$.
\end{proof}

\body{
To view presheaves (or sheaves for that matter) as computational problems, one defines the following general computational task. 
\vspace{0.4cm}

\defproblem{$F$-Decision}{An object $x \in \cat{C}$ where $\cat{C}$ is the domain of the fixed presheaf $F \colon \cat{C}^{\op} \to \Set$.} {Decide if $x$ has a non-empty set of certificates;  in other words, output ``yes'' if $F(x)$ is non-empty and ``no'' otherwise.}
}

\body{
At this point the reader is going to protest that the sheaf $\Va$ we just described does not encode an interesting decision problem, since all instances of $\Va$-\textsc{Decision} are yes-instances (the fact that we have placed no restriction on the sizes of the vertex covers implies that the entire vertex set is always a certificate). This is indeed the case and the reason we have taken the time to study $\Va$ is that it will appear naturally in connection with the following presheaf that does indeed encode \textsc{VertexCover}. 
} 

\body{Given a $k \in \mathbb{N}$, we consider

\[\begin{array}{ccccc}
    & \mathcal{V}_{\leq k} \colon & \sub(G)^{\cat{op}} & \longrightarrow         &  \Set\\
    \\
    &         & H              & \longmapsto &  \{A \subseteq V(H): A ~\text{is a vertex cover of }H\text{ and } |A|\leq k\}
    \end{array}\]

and we define the action on the arrows as before, because when we restrict a vertex cover with size at most $k$, we have a vertex cover with size at most $k$ (see Figure~\ref{fig: vertex cover presheaf}).}

\body{Now the same calculations as before show that $\mathcal{V}_{\leq k}$ is a functor. But, in this case, it is not necessarily a sheaf. This is because when we consider the union of the vertex covers of the smaller graphs (and we just saw in Proposition~\ref{prop: V is a sheaf} that this is the only possible vertex cover for the bigger graph), it can end up having size greater than $k$, as we will see next. In some sense, being reminiscent of the presheaf of bounded functions on a topological space (which famously fails to be a sheaf~\cite{rosiak-book}) the following is a prototypical failure of the sheaf condition.}

\begin{proposition} \label{prop: vert cover < k is not sheaf}
    $\Va_{\leq k}$ is not a sheaf.
\end{proposition}

\begin{proof}
Let's consider the following example, where we are looking for vertex covers of size one in a two-vertex complete graph, i.e., we set $k=1$ and $G = K_2$ (the idea can be adapted to any $k > 1$). 

\[
\tikzset{every picture/.style={line width=0.75pt}} 
\begin{tikzpicture}[x=0.75pt,y=0.75pt,yscale=-1,xscale=1]

\draw   (262,52.8) .. controls (262,40.13) and (295.94,29.85) .. (337.8,29.85) .. controls (379.66,29.85) and (413.6,40.13) .. (413.6,52.8) .. controls (413.6,65.47) and (379.66,75.75) .. (337.8,75.75) .. controls (295.94,75.75) and (262,65.47) .. (262,52.8) -- cycle ;
\draw   (281,131.3) .. controls (281,120.64) and (308,112) .. (341.3,112) .. controls (374.6,112) and (401.6,120.64) .. (401.6,131.3) .. controls (401.6,141.96) and (374.6,150.6) .. (341.3,150.6) .. controls (308,150.6) and (281,141.96) .. (281,131.3) -- cycle ;
\draw   (260,190.8) .. controls (260,182.07) and (269.54,175) .. (281.3,175) .. controls (293.06,175) and (302.6,182.07) .. (302.6,190.8) .. controls (302.6,199.53) and (293.06,206.6) .. (281.3,206.6) .. controls (269.54,206.6) and (260,199.53) .. (260,190.8) -- cycle ;
\draw   (319,238.74) .. controls (319,226.73) and (330.55,217) .. (344.8,217) .. controls (359.05,217) and (370.6,226.73) .. (370.6,238.74) .. controls (370.6,250.75) and (359.05,260.48) .. (344.8,260.48) .. controls (330.55,260.48) and (319,250.75) .. (319,238.74) -- cycle ;
\draw    (302,53) -- (373.6,52.6) ;
\draw [shift={(373.6,52.6)}, rotate = 359.68] [color={rgb, 255:red, 0; green, 0; blue, 0 }  ][fill={rgb, 255:red, 0; green, 0; blue, 0 }  ][line width=0.75]      (0, 0) circle [x radius= 3.35, y radius= 3.35]   ;
\draw [shift={(302,53)}, rotate = 359.68] [color={rgb, 255:red, 0; green, 0; blue, 0 }  ][fill={rgb, 255:red, 0; green, 0; blue, 0 }  ][line width=0.75]      (0, 0) circle [x radius= 3.35, y radius= 3.35]   ;
\draw    (304,133) ;
\draw [shift={(304,133)}, rotate = 0] [color={rgb, 255:red, 0; green, 0; blue, 0 }  ][fill={rgb, 255:red, 0; green, 0; blue, 0 }  ][line width=0.75]      (0, 0) circle [x radius= 3.35, y radius= 3.35]   ;
\draw    (382.6,132.48) ;
\draw [shift={(382.6,132.48)}, rotate = 0] [color={rgb, 255:red, 0; green, 0; blue, 0 }  ][fill={rgb, 255:red, 0; green, 0; blue, 0 }  ][line width=0.75]      (0, 0) circle [x radius= 3.35, y radius= 3.35]   ;
\draw    (281.3,190.8) ;
\draw [shift={(281.3,190.8)}, rotate = 0] [color={rgb, 255:red, 0; green, 0; blue, 0 }  ][fill={rgb, 255:red, 0; green, 0; blue, 0 }  ][line width=0.75]      (0, 0) circle [x radius= 3.35, y radius= 3.35]   ;
\draw    (412.3,191.8) ;
\draw [shift={(412.3,191.8)}, rotate = 0] [color={rgb, 255:red, 0; green, 0; blue, 0 }  ][fill={rgb, 255:red, 0; green, 0; blue, 0 }  ][line width=0.75]      (0, 0) circle [x radius= 3.35, y radius= 3.35]   ;
\draw    (299.6,171.6) -- (314.26,155.37) ;
\draw [shift={(315.6,153.88)}, rotate = 132.09] [color={rgb, 255:red, 0; green, 0; blue, 0 }  ][line width=0.75]    (10.93,-3.29) .. controls (6.95,-1.4) and (3.31,-0.3) .. (0,0) .. controls (3.31,0.3) and (6.95,1.4) .. (10.93,3.29)   ;
\draw [shift={(299.6,171.6)}, rotate = 312.09] [color={rgb, 255:red, 0; green, 0; blue, 0 }  ][line width=0.75]      (0,-11.18) .. controls (-3.09,-11.18) and (-5.59,-8.68) .. (-5.59,-5.59) .. controls (-5.59,-2.5) and (-3.09,0) .. (0,0) ;
\draw    (391.6,173.6) -- (387.21,167.78) -- (375.89,154.41) ;
\draw [shift={(374.6,152.88)}, rotate = 49.76] [color={rgb, 255:red, 0; green, 0; blue, 0 }  ][line width=0.75]    (10.93,-3.29) .. controls (6.95,-1.4) and (3.31,-0.3) .. (0,0) .. controls (3.31,0.3) and (6.95,1.4) .. (10.93,3.29)   ;
\draw [shift={(391.6,173.6)}, rotate = 232.94] [color={rgb, 255:red, 0; green, 0; blue, 0 }  ][line width=0.75]      (0,0) .. controls (-3.09,0) and (-5.59,2.5) .. (-5.59,5.59) .. controls (-5.59,8.68) and (-3.09,11.18) .. (0,11.18) ;
\draw    (380.6,226.6) -- (394.35,209.44) ;
\draw [shift={(395.6,207.88)}, rotate = 128.71] [color={rgb, 255:red, 0; green, 0; blue, 0 }  ][line width=0.75]    (10.93,-3.29) .. controls (6.95,-1.4) and (3.31,-0.3) .. (0,0) .. controls (3.31,0.3) and (6.95,1.4) .. (10.93,3.29)   ;
\draw [shift={(380.6,226.6)}, rotate = 308.71] [color={rgb, 255:red, 0; green, 0; blue, 0 }  ][line width=0.75]      (0,-11.18) .. controls (-3.09,-11.18) and (-5.59,-8.68) .. (-5.59,-5.59) .. controls (-5.59,-2.5) and (-3.09,0) .. (0,0) ;
\draw    (309.6,225.6) -- (293.94,208.36) ;
\draw [shift={(292.6,206.88)}, rotate = 47.75] [color={rgb, 255:red, 0; green, 0; blue, 0 }  ][line width=0.75]    (10.93,-3.29) .. controls (6.95,-1.4) and (3.31,-0.3) .. (0,0) .. controls (3.31,0.3) and (6.95,1.4) .. (10.93,3.29)   ;
\draw [shift={(309.6,225.6)}, rotate = 227.75] [color={rgb, 255:red, 0; green, 0; blue, 0 }  ][line width=0.75]      (0,0) .. controls (-3.09,0) and (-5.59,2.5) .. (-5.59,5.59) .. controls (-5.59,8.68) and (-3.09,11.18) .. (0,11.18) ;
\draw    (347.6,97.6) -- (346.41,91.2) -- (348.18,82.84) ;
\draw [shift={(348.6,80.88)}, rotate = 102.01] [color={rgb, 255:red, 0; green, 0; blue, 0 }  ][line width=0.75]    (10.93,-3.29) .. controls (6.95,-1.4) and (3.31,-0.3) .. (0,0) .. controls (3.31,0.3) and (6.95,1.4) .. (10.93,3.29)   ;
\draw [shift={(347.6,97.6)}, rotate = 259.43] [color={rgb, 255:red, 0; green, 0; blue, 0 }  ][line width=0.75]      (0,-11.18) .. controls (-3.09,-11.18) and (-5.59,-8.68) .. (-5.59,-5.59) .. controls (-5.59,-2.5) and (-3.09,0) .. (0,0) ;
\draw   (391,191.8) .. controls (391,183.07) and (400.54,176) .. (412.3,176) .. controls (424.06,176) and (433.6,183.07) .. (433.6,191.8) .. controls (433.6,200.53) and (424.06,207.6) .. (412.3,207.6) .. controls (400.54,207.6) and (391,200.53) .. (391,191.8) -- cycle ;

\draw (279,41) node [anchor=north west][inner sep=0.75pt]   [align=left] {$\displaystyle v_{1}$};
\draw (382,42) node [anchor=north west][inner sep=0.75pt]   [align=left] {$\displaystyle v_{2}$};
\draw (234,42) node [anchor=north west][inner sep=0.75pt]   [align=left] {$\displaystyle G$};
\draw (412,116) node [anchor=north west][inner sep=0.75pt]   [align=left] {$\displaystyle H_{3}$};
\draw (224,187) node [anchor=north west][inner sep=0.75pt]   [align=left] {$\displaystyle H_{1}$};
\draw (447,186) node [anchor=north west][inner sep=0.75pt]   [align=left] {$\displaystyle H_{2}$};
\draw (338.29,230.2) node [anchor=north west][inner sep=0.75pt]   [align=left] {$\displaystyle \emptyset $};
\end{tikzpicture}
\]

Consider the cover $\{\emptyset, H_1, H_2\}$ of $H_3$ and consider the matching family 
$$\emptyset \in \Va_{\leq 1}(\emptyset), \{v_1\} \in \Va_{\leq 1}(H_1), \{v_2\} \in \Va_{\leq 1}(H_2).$$
This family does not have an amalgamation in $\mathcal{V}_{\leq 1}(H_3) = \{\emptyset,\{v_1\},\{v_2\}\}$. In fact, as we saw in the proof of Proposition~\ref{prop: V is a sheaf}, the only possible amalgamation is the union of the local solutions, but in this case the union is $\{v_1,v_2\}$, which is not in $\Va_{\leq 1}(H_3)$.
\end{proof}

\body{Although $\Va_{\leq k}$ is not a sheaf, it is somehow halfway there. In the definition of the sheaf condition, the amalgamation has to satisfy two things: existence and uniqueness. In the case of $\Va_{\leq k}$, we have the latter: when the amalgamation of matching family exists, it is unique, so this presheaf fails to be a sheaf only with respect to the ``existence'' of amalgamations. Since a presheaf can fail to be a sheaf in these two ways, we present the following terminology that encodes what happens when a presheaf satisfies either uniqueness or existence.

\begin{definition} \label{sep-lav-set} We say that a presheaf $F \colon \sub(G)^{\cat{op}} \to \Set$ is
    \begin{itemize}
        \item[(a)] \textbf{separated} if given $H \in \sub(G)$ and a cover $\{H_i\}_{i \in I}$ for $H$, if $A,B \in F(H)$ are such that $A \restrict _{H_{i}} = B\restrict_{H_{i}}$ for all $i \in I$, then $A = B$. 
        \item[(b)] \textbf{lavish}\footnote{What we call \textit{lavish} presheaves are often referred to as \textit{epi-presheaves}. We find this terminology confusing since \textit{flasque} presheaves (i.e. presheaves whose restriction maps are epic) could also be deserving of the name ``epi-presheaf''. To mitigate any source of confusion, we stray from any use of this term by sticking to ``lavish'' and ``flasque''. Furthermore, and this is part of the moral of the present paper, lavish presheaves deserve their own name since they are in many respects just as important as their separated counterpart.} if given $H \in \sub(G)$, a cover $\{H_i\}_{i \in I}$ for $H$ and a matching family $\{A_i \in F(H_i)\}_{i \in I}$, there exists $A \in F(H)$ such that $A \restrict_{H_{i}} = A_i$ for all $i \in I$.
    \end{itemize}
\end{definition}

\begin{proposition} \label{prop:V-separated}
    $\Va_{\leq k}$ is a separated presheaf.
\end{proposition}

\begin{proof}
    Given $H \in \sub(G)$ and a cover $\{H_i\}_{i \in I}$ for $H$, suppose $A,B \in \Va_{\leq k}(H)$ are such that $A \restrict _{H_{i}} = B\restrict_{H_{i}}$ for all $i \in I$. If $v \in A \subseteq V(H)$, since $H = \bigcup_{i \in I}H_i$, there exists a $i \in I$ such that $v \in V(H_i)$. Therefore, $v \in A \restrict _{H_{i}}$, and since $A \restrict _{H_{i}} = B\restrict_{H_{i}}$ this implies that $v \in B \restrict _{H_{i}}$, ie, $v \in B$. The inclusion $B \subseteq A$ is similar.
\end{proof}

\body{There is a well-known construction called \textit{sheafification}~\cite{rosiak-book} that allows one to construct a sheaf from any given presheaf. As was shown recently~\cite{deciding-sheaves}, if a decision problem can be encoded as a sheaf, then it admits a fast (i.e. $\fpt$-time) dynamic programming algorithm. As we will see, although it is of great use in geometry, the sheafification construction is not immediately useful for algorithmic considerations since, for example, when it is applied to $\Va_{\leq k}$, it yields the sheaf $\Va$ (which, we have already seen, does not encode an interesting computational problem). However, as we will see in Section~\ref{sec: abstract tools and more examples}, sheafification can be conceptually useful when trying to understand \textit{obstructions to algorithmic compositionality}, in other words, obstructions to begin a sheaf. To that end, in the following subsection we will first revisit the definition of a sheaf in a more category-theoretic way and recall the definition of the zeroth cohomology group.}

\subsection{Some Background on Cohomology: Obstructions to being a sheaf}

\body{
In general, what it means for a presheaf to be a sheaf (resp. a separated or lavish presheaf) can be stated in terms of an \textit{equalizer diagram}. In the interest of being self-contained, we include the definition below.
}

\begin{definition}
    Given two morphisms $f,g: x \to y$ in a category $\cat{C}$, we say that an object $z \in \cat{C}$ together with a morphism $h: z \to x$ is an \textbf{equalizer of $f$ and $g$} if
    \begin{itemize}
        \item $f \circ h = g \circ h$;
        \item Whenever $j: w \to x$ satisfies $f \circ j = g \circ j$, there exists a unique morphism $k: w \to z$ such that $h \circ k = j$.
        \[
    \begin{tikzcd}
    z \arrow[r, "h"]                                    & x \arrow[r, "f", shift left] \arrow[r, "g"', shift right] & y \\
    w \arrow[ru, "j"'] \arrow[u, "\exists! ~k", dashed] &                                                           &  
    \end{tikzcd}
    \]
    \end{itemize}
\end{definition}

\body{Equalizers can be thought as categorical ways of encoding equalitites. In $\Set$, we have that the equalizer object of two functions $f,g: A \to B$ is the subset $E = \{a \in A : f(a) = g(a)\}$ of $A$ on which $f$ and $g$ agree. 
}

\body{With this definition in mind, we can now recall a more categorical framing of the 
properties described in Definitions~\ref{sheaf set} and~\ref{sep-lav-set}.}

\begin{definition} \label{sheaf cat}
    Let $F \colon \sub(G)^{\op} \to \Set$ be a presheaf. For any object $H \in \sub(G)$ and any cover
    $$ \Ua = (H_i \xhookrightarrow{f_i} H)_{i \in I} $$
    of $H$, the \textbf{matching family of $F$ on $\Ua$} are defined to be the following equalizer in $\Set$ (which we will denote as $\Match(\Ua,F$))

    \[\begin{tikzcd}
	{\cat{Eq}(q_{1,0}, q_{1,1})} && {\prod_{i \in I}F(H_i)} && {\prod_{i,j \in I}F(H_i \cap H_j)}
	\arrow["{q_{1,0}}", shift left=2, from=1-3, to=1-5]
	\arrow["{q_{1,1}}"', shift right=2, from=1-3, to=1-5]
	\arrow[hook, from=1-1, to=1-3]
\end{tikzcd}\]
where $q_{1,0}$ is given for each $i,j \in I$ by the restriction map $F(H_i) \to F(H_i \cap H_j)$, while $q_{1,1}$ by the restriction $F(H_j) \to F(H_i \cap H_j)$.

    The maps $(F(H) \xrightarrow{F(f_i)} F(H_i))_{i \in I}$ assemble into a morphism

    \[\begin{array}{ccccc}
    & \delta^{-1} \colon & F(H) & \longrightarrow         &  \prod_{i \in I} F(H_i)\\
    \\
    &         & A              & \longmapsto &  (F(f_i)(A))_{i \in I}
    \end{array}\]
    
    equalizing $q_{1,0}$ and $q_{1,1}$ (where the superscript in $\delta^{-1}$ denotes an index, not an inverse map). Thus, by the universal property of equalizers, one has a unique map $\xi$ making the following commute.
    \[
\begin{tikzcd}
F(H) \arrow[rr, "\delta^{-1}"] \arrow[rd, "\exists!\xi"', dashed] &                                        & \prod_{i \in I}F(H_i) \arrow[rr,shift left=.75ex,"q_{1,0}"]
\arrow[rr,shift right=.75ex,swap,"q_{1,1}"] &  & {\prod_{i,j \in I}F(H_i \cap H_j)} \\
                                                                  & {Eq(q_{1,0},q_{1,1})} \arrow[ru, hook] &                                          &  &                                         
\end{tikzcd}\]
    If, for all objects $H$ and all covers $\Ua$ thereof, the arrow $\xi$ is
    \begin{itemize}
        \item a monomorphism, then one says that $F$ is \textbf{separated};
        \item an epimorphism, then one says that $F$ is \textbf{lavish};
        \item an isomorphism, then one says that $F$ is \textbf{sheaf}.
    \end{itemize}
    
    \end{definition}

\begin{example}
    Taking $F$ in the definition above to be $\mathcal{V}_{\leq k}$ and setting $\Ua = \{H_1, H_2, H_3\}$ to be a cover of some graph $H \subseteq G$, we have that
\[\begin{array}{ccccc}
    & q_{1,0} \colon & \Va_{\leq k}(H_1) \times \Va_{\leq k}(H_2) \times \Va_{\leq k}(H_3) & \longrightarrow         &  \Va_{\leq k}(H_1 \cap H_2) \times \Va_{\leq k}(H_1 \cap H_3) \times \Va_{\leq k}(H_2 \cap H_3)\\
    \\
    &         & (A_1, A_2, A_3)              & \longmapsto &  (A_1 \restrict_{H_{1} \cap H_{2}}, \: A_{1} \restrict_{H_{1} \cap H_{3}}, \:  A_2\restrict_{H_{2} \cap H_{3}})
    \end{array}\]
when we restrict to the first index, and 
\[\begin{array}{ccccc}
    & q_{1,1} \colon & \Va_{\leq k}(H_1) \times \Va_{\leq k}(H_2) \times \Va_{\leq k}(H_3) & \longrightarrow         &  \Va_{\leq k}(H_1 \cap H_2) \times \Va_{\leq k}(H_1 \cap H_3) \times \Va_{\leq k}(H_2 \cap H_3)\\
    \\
    &         & (A_1, A_2, A_3)              & \longmapsto &  (A_2 \restrict_{H_{1} \cap H_{2}}, \: A_{3} \restrict_{H_{1} \cap H_{3}}, \:  A_3\restrict_{H_{2} \cap H_{3}})
    \end{array}\]
when we restrict to the second one.
 Now, the set of triples $(A_1,A_2,A_3)$ that have the same image under $q_{0}$ and $q_{1}$ are exactly the families of solutions that agree on the intersections, that is, the matching families as we defined in \ref{sheaf set}. Also, $\xi$ has the same action of $\delta^{-1}$, i.e., the restriction of solutions; thus, to say that $\xi$ is mono is to say that two solutions of $H$ that agree on each restriction are the same and to say that $\xi$ is epi is to say that for each matching family of local solutions there exists at least a solution of $H$ whose  restriction to each subgraph corresponds to the local solutions of the matching family. 
\end{example}

\body{Notice, as it is completely standard, that if we consider $\cat{Ab}$, the category of Abelian groups instead of $\Set$ in the definition of sheaf, we can rewrite the set where the functions agree to be the kernel of the difference $q_{1,1} - q_{1,0}$. This approach will be useful to us as we wish to apply standard constructions in cohomology. In order to talk about Čech cohomology, we first recall the notion of Čech nerve.}

\definition{
Consider $X \in \sub(G)$ and a cover $\Ua = (H_i \xhookrightarrow{f_i} X)_{i\in I}$. The \define{Čech nerve} of the cover $\Ua$ -- denoted $N(\Ua)$ --  is the following simplicial object in $\sub(G)$ (whose degeneracy maps are left unnamed for clarity): 
\[
    \begin{tikzcd}[ampersand replacement=\&]
    	X \&\& {\coprodcup_{i \in I} H_i} \&\& {\coprodcup_{i,j \in I} H_{i} \cap H_{j}} \&\& {\coprodcup_{i,j,k \in I} H_{i} \cap H_j \cap H_k} \& \dots
    	\arrow["{d_0}"', from=1-3, to=1-1]
    	\arrow["{d_{1,1}}", shift left=3, from=1-5, to=1-3]
    	\arrow["{d_{1,0}}"', shift right=3, from=1-5, to=1-3]
    	\arrow["{d_{2,2}}", shift left=5, from=1-7, to=1-5]
    	\arrow["{d_{2,0}}"', shift right=5, from=1-7, to=1-5]
    	\arrow["{d_{2,1}}"{description}, from=1-7, to=1-5]
    	\arrow[from=1-3, to=1-5]
    	\arrow[shift left=3, from=1-5, to=1-7]
    	\arrow[shift right=3, from=1-5, to=1-7]
    \end{tikzcd}
\]
Here the parallel morphisms are defined the usual way (e.g., implicitly assuming an ordering on the index set $I$, one has that $d_{2, 1} \colon H_{i} \cap H_j \cap H_k \to H_{i} \cap H_k$).
}
\body{
Now fix a presheaf $F: \sub(G)^{\sf op} \to \cat{Ab}$. Taking the image of a Čech nerve under $F$ induces the following cochain complex (note that the domain of $\delta^{-1}_{\Ua}$ is $F(X)$ -- i.e. the global sections -- instead of $0$; \textit{this differs from the standard treatment}\footnote{The reader familiar with standard treatments of Čech cohomology might expect $H^0(X,F)$ to be the global sections of $F$ whenever $F$ is a sheaf. However, under the constrution we are considering here, since the domain of $\delta^{-1}_{\Ua}$ is $F(H)$, $H^0(X,F)$ measures the difference between matching families ($\ker \delta^0$) and the global sections: thus it is \textbf{trivial} if $F$ is a sheaf. Note that there are no differences between our approach and the standard one for higher cohomology objects.})
\[\begin{tikzcd}
	{F(X)} && {\prod_i F(H_i)} && {\prod_{i,j} F (H_{i} \cap H_j)} && {\prod_{i,j,k} F(H_i \cap H_j \cap H_k)} & \dots
	\arrow["{\delta^{-1}_{\Ua}}", from=1-1, to=1-3]
	\arrow["{\delta^{0}_{\Ua}}", from=1-3, to=1-5]
	\arrow["{\delta^{1}_{\Ua}}", from=1-5, to=1-7]
\end{tikzcd}\]
where the coboundary maps are given by 
\[
    \delta^{-1}_{\Ua} \defeq F(d_0) \quad \text{ and } \quad
    \delta^n_{\Ua}    \defeq \sum_{i = 0}^n (-1)^i F(d_{n+1, i}).
\]
Since $F$ takes values in $\cat{Ab}$, one can define the $n$-th Čech cohomology of the presheaf $F$ on the object $H$ with cover $\Ua$ as the quotient of $\ker \delta_{\Ua}^n$ by $\imageObj \delta_{\Ua}^{n-1}$ as in the Definition~\ref{def:cohomology-cover-specific} below. 
}

\begin{definition}\label{def:cohomology-cover-specific}
The \textbf{$n$-th Čech cohomology} of a presheaf $F\colon \sub(G)^{\op} \to \cat{Set}$ on an object $X$ with cover $\Ua$ is defined as
\[H^n(X, \Ua, F) := \ker(\delta^n) / \im(\delta^{n-1}).\]
\end{definition}

\begin{observation}\label{para:assumptions}
    For readers familiar with sheaf theory, we point out that all general results pertaining to cohomology that we develop in this paper apply to any site $(\cat{C}, J)$ where $J$ is a Grothendieck pretopology whose covers are \textit{finitely generated}.\footnote{Recall that a sieve on an object $X$ is finitely generated if it can be generated by pre-composition starting with a family of morphisms $(U_i \to X)_{i \in I}$ indexed by a finite family $I$.} For the reader not familiar with sheaf theory, the previous sentence roughly means that we need not restrict to (pre)sheaves of the form $\sub(G)^{\op} \to \Set$; instead we can consider any category $\cat{C}$ with sufficiently nice structure (i.e. admitting pullbacks) and equipped with a well-defined notion of what it means for a collection of objects to ``\textit{cover}'' any given input object $x \in \cat{C}$. We also can consider any Abelian category instead of $\cat{Ab}$, such as $\cat{Vect}(K)$ vector spaces over a field $K$ and $\cat{Mod}(R)$ modules over a ring $R$. In that case, the quotient in Definition~\ref{def:cohomology-cover-specific} can be generalized by $\coker\bigl(\imageObj(\delta^{n-1}) \to \ker(\delta^n)\bigr)$.
\end{observation}

\begin{example}
    Let's see the zeroth presheaf-\v{C}ech cohomology in a concrete example. Consider the same example of Proposition~\ref{prop: vert cover < k is not sheaf}, where $G$ is a two-vertex complete graph. Let's look at the cover $\Ua = \{\emptyset, H_1, H_2\}$ of $H_3$. First, we compute $\ker(\delta^0)$ (which is exactly the matching families)
$$ \ker (\delta^0) = \{\{\emptyset,\emptyset,\emptyset\},~~~~ \{\emptyset, \{v_1\},\emptyset\},~~~~ \{\emptyset,\emptyset, \{v_2\}\}, ~~~~\{\emptyset, \{v_1\}, \{v_2\}\} \}. $$

Now, we need to describe $\im(\delta^{-1})$. We have
$$\delta^{-1} \colon \mathcal{V}_{\leq 1}(H_3) \to \mathcal{V}_{\leq 1}(\emptyset) \times \mathcal{V}_{\leq 1}(H_1) \times \mathcal{V}_{\leq 1}(H_2) $$
$$ \delta^{-1}\colon \{\emptyset,\{v_1\},\{v_2\}\} \to \{\emptyset\} \times \{\emptyset,\{v_1\}\} \times \{\emptyset,\{v_2\}\} $$
which is given by sending each solution of $\Va_{\leq 1}(H_3)$ to its corresponding restriction to each $H_i$. That way, we have
$$ \im(\delta^{-1}) = \{\{\emptyset,\emptyset,\emptyset\},~~~~ \{\emptyset, \{v_1\},\emptyset\},~~~~ \{\emptyset,\emptyset, \{v_2\}\}\}. $$

Therefore, the quotient $H^0 = \ker(\delta^0) / \im(\delta^{-1})$ is generated by $\{\emptyset, \{v_1\}, \{v_2\}\}$, which is exactly the matching family that fails to have an amalgamation.
\end{example}

\body{We have that $\delta^{-1}$ describes how global sections restrict to local sections. If $\mathcal{V}_{\leq 1}$ were a sheaf, every matching family would be "lifted" to a unique global section, i.e., every matching family (all of $\ker(\delta^0)$) would be in the image of $\delta^{-1}$.}

\body{The generators of $H^0$ indicate the local sections that, despite agreeing on the intersections, fail to be lifted to global sections. Recall that the functor $F$ being lavish means that for each matching family of local solutions there exists at least a solution on the global graph whose restriction to each subgraph corresponds to the local solutions of the matching family. Thus, $H^0$ generally informs us about the existence of amalgamations: when $H^0(X,\Ua,F)$ is trivial for all objects $X$ and all covers of $X$, $F$ is lavish; when $H^0(X,\Ua,F)$ is non-trivial, $F$ fails to be lavish. In short, we have learned that the zeroth presheaf \v{C}ech cohomology can be thought of as a measure of the failure of being lavish.}

\body{
One can distill (see for example Gallier and Quaintance~\cite{GallierQuaintanceBook}) from Definition~\ref{def:cohomology-cover-specific} a cover-independent cohomology object $H^n(X,F)$ defined as 
\begin{equation}\label{eqn:def:cover-indep-cohomology}
    H^n(X, F) \defeq \colim_{\Ua \in \Cov (X)} H^n(X, \Ua, F).    
\end{equation} We will be interested in the functor that this defines when the first argument varies and the second is fixed, namely the presheaf:
\[H^n(-, F) \colon \cat{C}\op \to \cat{Ab}.\]}

\body{We note that we can use the technique of Abelianization to define the functor $\Va_{\leq k}$ in $\cat{Ab}$. As a matter of fact, it can be used in any $F$ defined in $\Set$. We describe this process below.}

 \begin{definition}
 Given a set $S$, we say that a function $a \colon S \to \Z$ with $\im(a) \setminus \{0\}$ finite is a \textbf{linear combination of (elements of) $S$}. We can also represent the linear combinations of $S$ as
 $ a = \sum_{s \in S}~ a_s \cdot s$
 where $a_s := a(s)$ and we denote by $s$ the function $s \colon S \to \Z$ that sends $s$ to $1$ and the rest of the elements of $S$ to $0$. Now, we define the group 
 $\Z[S] = \{a: a ~\text{is a linear combination of}~S\}$ 
 with group operation given by
 $(\sum_{s \in S}~ a_s \cdot s) + (\sum_{s \in S}~ b_s \cdot s) = \sum_{s \in S}~ (a_s + b_s) \cdot s.$
 \end{definition}

 \body{In the case of \textsc{VertexCover} we have
 \[ \Z[\Va_{\leq k}(H)] = \{\sum_{A \in \Va_{\leq k}(H)}~ a_A \cdot A : a \text{ is a linear combination of } \Va_{\leq k}(H)\} \]
 where $A$ denotes the function $A \colon \Va_{\leq k}(H) \to \Z$ that sends $A$ to $1$ and the rest of the elements of $\Va_{\leq k}(H)$ to $0$. In the next section, we use this technique to explicitly describe $H^0$ for \textsc{VertexCover}.
 }
 
\section{The Zeroth \v{C}ech Cohomology of Computational Problems}\label{sec: abstract tools and more examples}

\body{
In this section we will consider other computational problems such as \textsc{CycleCover} and \textsc{OddCycleTransversal}. We will show that the zeroth cohomology group finds other kinds of obstructions. For example, in the case of \textsc{CycleCover} (Proposition~\ref{h0-cycle-cover}), the zeroth cohomology groups detects not just issues of size, as it did in the case of \textsc{VertexCover}, but also the phenomenon of \textit{emergent cycles}: cycles that are only seen when one considers more than one piece of a cover at once.
}

\body{
To do this systematically, we will first develop some general results concerning the cohomology of separated presheaves in the following subsection. These results are of independent interest, but they are also useful since they will allow us to avoid tedious calculations later on and indeed they make it possible to apply cohomological tools to study computational problems without needing to focus too much on the commutative algebra involved. With that in mind, the reader who is not interested in (or not familiar with) category theory can safely skip the proofs in the following subsection and simply consider how the results are used later on. 
}

\subsection{Developing Some Technical Machinery}

\body{We have mentioned that one can turn a presheaf into a sheaf through a construction called \textit{sheafification}. Formally, being an \textit{adjoint functor}, there is a sense in which sheafification sends any presheaf to its ``nearest'' sheaf~\cite{rosiak-book, riehl2017category}. We will see that, in cases where $F$ is a separated presheaf on $\sub(G)$, the sheafification of $F$, denoted by $F^+$, will help us characterize the zeroth presheaf \v{C}ech cohomology: $H^0(X,F)$ will be something analogous to the quotient of $F^+(X)$ by $F(X)$. This result gives us an easier and more conceptual approach to computing $H^0$.}

\begin{observation}
    Once again, nodding to readers knowledgeable in sheaf theory, we point out that the following results all generalize to more interesting domains: rather than studying presheaves on $\sub(G)$, the results of this section apply equally well to any category equipped with a Grothendieck pretopology which is \textit{finitely presented} (meaning that each covering sieve is generated by finitely many morphisms).
\end{observation}

\body{To turn $F$ into a sheaf, the idea is to somehow replace $F(H)$ by the matching families over all possible covers of $H$. Sheafification consists of two application of a construction called the \textit{plus construction}. Roughly the idea is the following: we want to keep any section of $F(H)$ that was already an amalgamation of a matching family, but we also want to formally add to our collection of global sections all those matching families that did not admit an amalgamation. Furthermore, we wish to do so without creating duplicates of amalgamations. To that end one instead works with equivalence classes of matching families (where these equivalence classes are obtained by identifying matching families that have a restriction in common).
} \label{idea-sheafification}

\begin{definition}\label{def: sheafification}
    Let $\Sh(\sub(G))$, $\SPsh(\sub(G))$ and $\Psh(\sub(G))$ denote the categories of sheaves, separated presheaves and presheaves on $\sub(G)$, respectively. The composite
    \[
        \Sh(\sub(G)) \hookrightarrow \SPsh(\sub(G)) \hookrightarrow \Psh(\sub(G))
    \]
    admits a left adjoint called the \textbf{sheafification} functor (see Rosiak's textbook~\cite[Sec. 10.2.7]{rosiak-book} for a reference). This functor is given by two applications of the plus construction $(-)^+ \colon \Psh(\sub(G)) \to \Psh(\sub(G))$ defined as 
    \[
        F^+ \colon X \mapsto \underset{\mathcal{U \in \cat{Cov}(X)}}{\cat{colim}} \Match(\Ua,F).
    \]
\end{definition}

\body{In $\Set$, this colimit is given by the quotient 
\[
    \coprod_{\Ua \in \Cov(X)} \Match(\Ua,F)/\sim
\]
where $\coprod$ denotes a disjoint union and $\sim$ is the equivalence relation given as follows: for $A = \{A_i\}_{i \in I} \in \Match(\Ua,F)$ and $B = \{B_j\}_{j \in J} \in \Match(\Ua',F)$, with $\Ua = \{H_i\}_{i \in I}, \Ua' = \{H_j\}_{j \in J} \in \Cov(X)$, we have
\[
    A \sim B ~~\text{iff}~~ \exists~ \Ua''=\{H_l\}_{l \in L}  \in \Cov, ~\text{with}~ \Ua''\subseteq \Ua, \Ua'' \subseteq \Ua', (H),~\text{such that}~\forall l \in L, ~A_l = B_l
\]
which justifies the idea presented in \ref{idea-sheafification}.
} \label{colim-in-set}

\begin{observation}\label{observation:colimit-of-kernels} 
One can also state the definition of the plus construction as
$F^+ X \cong \colim_{\Ua \in \Cov (X)} \ker \delta^0_{\Ua}$. This follows directly from the definition since
$
    \Match(\Ua, F) \cong \ker \delta^0_{\Ua}
$
for any cover $\Ua$ of any $X$.
\end{observation}

\body{
We point out that, for any presheaf $F$, the presheaf $F^+$ is necessarily separated and $F^{++}$ is a sheaf; in particular,
\textit{one application of the plus construction suffices to obtain a sheaf from any separated presheaf} (see \cite{rosiak-book} for a reference). }

\body{Many interesting problems are monotone under the subgraph relation, as we will see later in this section. We will see that such problems can be represented by presheaves $F$ on $\sub(G)$ such that the set of solutions of a subgraph $H \subseteq G$ is actually a subset of $\sub(H)$ (in the case of \textsc{VertexCover}, the vertex covers can be seen as subgraphs with only vertices, no edges). We will also see that, as is the case of \textsc{VertexCover}, such problems are always separated presheaves, where the union is the only possible solution. We prove now a result that can characterize the sheafification of such problems. 

\begin{proposition} \label{prop:chac-sheafification}
    Let $F\colon \sub(G)^{\op} \to \Set$ be a separated presheaf. Suppose that $\tilde{F}\colon \sub(G)^{\op} \to \Set$ is a presheaf such that for all edges $\alpha \in E(G)$, $\tilde{F}(\alpha) \subseteq F(\alpha)$. Moreover, given $H \subseteq G$, suppose that $\bigcup_{i \in I} A_i \in \tilde{F}(H)$ whenever $\{A_i \in F(H_i)\}_{i \in I}$ is a matching family of the cover $H = \bigcup_{i \in I} H_i$. Then $\tilde{F}$ is the sheafification of $F$.
\end{proposition}

\begin{proof}
    Since $F$ is separated we have
    \[ F^+(H) = \colim_{\Ua \in \text{Cov}(H)} \Match(\Ua,F) \]
    So we need to check that 
    \[\tilde{F}(H) = \colim_{\Ua \in \text{Cov}(H)} \Match(\Ua, F)\]
    Given $\Ua,\mathcal{U'} \in \text{Cov}(H)$ and an arrow $\Ua \hookrightarrow \Ua'$, we have the following commutative triangles
    \[
\begin{tikzcd}
{\Match(\Ua,F)} \arrow[rr, "\restrict_{\mathcal{U'}}"] \arrow[rd] &         & {\Match(\mathcal{U'},F)} \arrow[ld] \\
                                                                                                 & \tilde{F}(H) &                                                     
\end{tikzcd}\]
    where the arrows to $\tilde{F}(H)$ act as taking the union. We need to show that, given any $X$ and any arrows $f_{\Ua}: \Match(\Ua,F) \to X$ and $f_{\mathcal{U'}}:\Match(\mathcal{U'},F) \to X$, there exists a unique $k: \tilde{F}(H) \to X$ such that the following commutes
    \[
\begin{tikzcd}
{\Match(\Ua,F)} \arrow[rr, "\restrict_{\mathcal{U'}}"] \arrow[rd] \arrow[rdd, "f_{\Ua}"', bend right] &                                         & {\Match(\mathcal{U'},F)} \arrow[ld] \arrow[ldd, "f_{\mathcal{U'}}", bend left] \\
                                                                                                                               & \tilde{F}(H) \arrow[d, "\exists! k", dashed] &                                                                                   \\
                                                                                                                               & X                                       &                                                                                  
\end{tikzcd}\]
    We divide $H$ in subgraphs given by the edges, i.e., if $E(H) = \{\alpha_1,...,\alpha_n\}$ we denote $H_i = H[\alpha_i]$ and $f_i:H_i \hookrightarrow H$. We then consider the cover $\Ua = \{H_i\}_{i \leq n}$.

    Given $A \in \tilde{F}(H)$, for each $i \leq n$ we consider $A_i = \tilde{F}(f_i)(A)$. By hypothesis, $A_i \in F(H_i)$, for all $i \leq n$. We denote this matching family by
    \[ Y_{H} = \{A_i \in F(H_i)\} \in \Match(\Ua,F) \]

    We then define
    \[\begin{array}{ccccc}
    & k: & \tilde{F}(H) & \longrightarrow         &  X\\
    \\
    &         & A              & \longmapsto &  f_{\Ua}(Y_{H})
    \end{array}\]
    
    Now, because $\Ua$ is the minimal cover of $H$ and because the arrows between the sets of matching families are the restriction, we have that $k$ makes the above diagram commute. Let us see that is the only arrow with this property. 

    Let $k': \tilde{F}(H) \to X$ be a function such that the following diagram commutes ($\Ua$ being the cover we defined above)
    \[
\begin{tikzcd}
{\Match(\Ua,F)} \arrow[rd, "f_{\Ua}"'] \arrow[r, "\cup"] & \tilde{F}(H) \arrow[d, "k'"] \\
                                                                                           & X                      
\end{tikzcd}\]
    Given $H' \subseteq H$ we can always consider the matching family $Y_{H'}$ defined above and we have that
    \[ k'(H') = k'(\cup Y_{H'}) = f_{\Ua}(Y_{H'}) \]
    where the second equality comes from the last commutative diagram. Therefore $k = k'$. 
\end{proof}

\body{Applying the above result, we will now see that, for $k\geq 2$, the sheafification of the presheaf $\Va_{\leq k}$ encoding \textsc{VertexCover} is none other than the sheaf $\Va$ of Proposition~\ref{prop: V is a sheaf}}

\begin{proposition} \label{sheafification-of-Vk}
    $\mathcal{V} \colon \sub(G)^{\cat{op}} \to \Set$ is the sheafification of $\mathcal{V}_{\leq k}$, for $k \geq 2$.
\end{proposition}

\begin{proof}
    We have seen in Proposition~\ref{prop: V is a sheaf} that if we have $H \subseteq G$, a cover $\bigcup_{i \in I}H_i = H$ and $\{A_i\}_{i \in I}$ a family of vertex covers that agree on the intersections, then $\bigcup_{i \in I} A_i$ is a vertex cover of $H$. Furthermore, given an edge $\alpha\in E(G)$ and $A \in \Va(\alpha)$, we have $|A| \leq 2 \leq k$, so that $A \in \Va_{\leq k}$. By Proposition~\ref{prop:chac-sheafification}, $\Va$ is the sheafification of $\Va_{\leq k}$, whenever $k \geq 2$.
\end{proof}

\body{One of the contributions of this work is to show that this connection between cohomology and sheafification can be made very explicit and satisfying: Theorem~\ref{thm:cokernel} will show that, for any presheaf $F$, the zeroth presheaf Čech cohomology functor arises as the quotient of $F^+$ by $F$ (precisely, it will be the cokernel of $\eta_F \colon F \to F^+$, the unit of the adjunction given by sheafification).}

\begin{lemma}\label{lemma:colimit-of-images}
If $F \colon \sub(G)^{\op} \to \cat{Ab}$ is separated, then $\colim_{\Ua \in \Cov(X)} \im \delta^{-1}_{\Ua} \cong F(X)$.
\end{lemma}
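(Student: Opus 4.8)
The plan is to show that, when $F$ is separated, the diagram $\Ua \mapsto \im \delta^{-1}_{\Ua}$ over $\Cov(x)$ is \emph{naturally} isomorphic to the constant diagram at $F(x)$, and then to compute its colimit using the fact that $\Cov(x)$ is connected and nonempty. So the argument splits into three pieces: (i) each term of the diagram is isomorphic to $F(x)$; (ii) these isomorphisms are natural in the cover; (iii) the colimit of a constant diagram over $\Cov(x)$ is its value.

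First I would use the factorization recorded in Definition~\ref{def:sheaf}: each $\delta^{-1}_{\Ua}\colon F(x) \to \prod_{i} F(u_i)$ factors as $F(x) \xrightarrow{\xi_{\Ua}} \cat{Match}(\Ua, F) \hookrightarrow \prod_{i} F(u_i)$, where the second map is an equalizer inclusion, hence monic. Since $F$ is separated, $\xi_{\Ua}$ is monic, so $\delta^{-1}_{\Ua}$ is a monomorphism. Factoring $\delta^{-1}_{\Ua}$ through its image in the Abelian category $\cat{A}$ as $F(x) \xrightarrow{e_{\Ua}} \im \delta^{-1}_{\Ua} \hookrightarrow \prod_i F(u_i)$, the comparison $e_{\Ua}$ is epic by construction and monic because $\delta^{-1}_{\Ua}$ is; as Abelian categories are balanced, $e_{\Ua}$ is an isomorphism. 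Thus every object of the diagram is canonically isomorphic to $F(x)$.

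Next I would check naturality of the $e_{\Ua}$ in $\Ua$. For a refinement $\Ua \le \Va$ in $\Cov(x)$, the refinement maps induce a morphism $\prod_i F(u_i) \to \prod_j F(v_j)$ which restricts to the transition map $t_{\Ua\Va}\colon \im \delta^{-1}_{\Ua} \to \im \delta^{-1}_{\Va}$ of the diagram; functoriality of $F$ (composites of restriction maps are restriction maps) gives $t_{\Ua\Va}\circ e_{\Ua} = e_{\Va}$. This is exactly the naturality square exhibiting $(e_{\Ua})_{\Ua}$ as a natural transformation $\mathrm{const}_{F(x)} \Rightarrow \bigl(\Ua \mapsto \im \delta^{-1}_{\Ua}\bigr)$, and since every component is an isomorphism it is a natural isomorphism of diagrams. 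Finally, $\Cov(x)$ is a directed poset — any two covers admit a common refinement, e.g. the pullback cover $\{u_i \times_x v_j \to x\}$ — and it is nonempty since it contains the trivial cover $\{\mathrm{id}_x\}$; hence it is connected, and the colimit of a constant diagram over a connected nonempty domain is its constant value. Therefore $\colim_{\Ua \in \Cov(x)} \im \delta^{-1}_{\Ua} \cong \colim_{\Ua \in \Cov(x)} \mathrm{const}_{F(x)} \cong F(x)$.

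The step I expect to deserve the most care — and which I regard as the actual content of the lemma — is the naturality in the third paragraph. It is tempting to argue directly that ``a colimit of objects each isomorphic to $F(x)$ must be $F(x)$'', but this is false for a general diagram whose objects merely happen to be pairwise isomorphic; one genuinely needs the coherence supplied by the common source $F(x)$, i.e. the identity $t_{\Ua\Va}\circ e_{\Ua} = e_{\Va}$ rather than only the invertibility of $t_{\Ua\Va}$. Everything else is routine bookkeeping with epi--mono factorizations in an Abelian category.
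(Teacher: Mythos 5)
Your proof is correct and follows the same outline as the paper's: show each $\im \delta^{-1}_{\Ua}$ is isomorphic to $F(x)$ via separatedness plus balancedness of $\cat{A}$, then pass to the colimit. Where you differ is in the final step, and your version is the more careful one: the paper concludes by saying the diagram is ``a diagram of isomorphisms'' and that ``colimits preserve isos,'' which as stated is exactly the fallacious shortcut you warn against --- objects being pairwise isomorphic does not determine a colimit. Your explicit verification that the comparison maps $e_{\Ua} \colon F(x) \to \im \delta^{-1}_{\Ua}$ commute with the transition maps of the diagram (so that one has a natural isomorphism from the constant diagram at $F(x)$), together with connectedness of $\Cov(x)$, is the coherence the argument genuinely needs, and it is what the paper's proof implicitly relies on without spelling out. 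No gaps; if anything, your write-up supplies a justification the published proof elides.
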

\begin{proof}
Since $F$ is separated, the morphism $\delta^{-1}_{\Ua} \colon F(X) \to \prod_{i} F(H_i)$ is mono as shown in the following commutative diagram. 
\[\begin{tikzcd}
	{F(X)} && {\prod_{i}F(H_i)} \\
	& {\ker \delta^0_{\Ua}}
	\arrow["{\delta^{-1}_{\Ua}}", hook, from=1-1, to=1-3]
	\arrow["{\xi_{\Ua}}"', hook, from=1-1, to=2-2]
	\arrow[hook, from=2-2, to=1-3]
\end{tikzcd}\]
Thus, since $F$ is valued in an Abelian category, where every morphism which is both a monomorphism and an epimorphism is already an isomorphism, one has that $F(X) \cong \imageObj  \delta^{-1}_{\Ua}$. Moreover, this made no assumption on the choice of $\Ua$, thus 
\begin{equation}\label{eqn:lemma:colimit-of-images}
F(X) \cong \imageObj  \delta^{-1}_{\Ua} \quad \forall \Ua \in \Cov(X).     
\end{equation}
By the property of colimits, there exists a unique arrow $r \colon \colim_{\Ua \in \Cov(X)} \im \delta^{-1}_{\Ua} \to F(X)$ such that for all $\Ua \in \Cov(X)$
\[
\begin{tikzcd}
\imageObj \delta^{-1}_{\Ua} \arrow[r, "i"] \arrow[d, "s"']      & F(X) \\
\colim_{\Ua \in \Cov(X)} \im \delta^{-1}_{\Ua} \arrow[ru, "r"'] &     
\end{tikzcd}\]
commutes, that is, $r \circ s = i$, where $i \colon \imageObj \delta^{-1}_{\Ua} \cong F(X)$. Let us see that $r$ is an isomorphism. Since $i$ is iso, there exists $j \colon F(X) \to \imageObj \delta^{-1}_{\Ua}$ such that $i \circ j = id_{F(X)}$ and $j \circ i =id_{\imageObj \delta^{-1}_{\Ua}}$.
We have that $s \circ j$ is the inverse of $r$. Indeed, we have that
\[ r \circ s \circ j = i \circ j = id_{F(X)}. \]
Now, by the property of colimits, $id_{\imageObj \delta^{-1}_{\Ua}}$ is the only arrow such that
\[
\begin{tikzcd}
\imageObj \delta^{-1}_{\Ua} \arrow[r, "s"] \arrow[d, "s"']       & \colim_{\Ua \in \Cov(X)} \im \delta^{-1}_{\Ua} \\
\colim_{\Ua \in \Cov(X)} \im \delta^{-1}_{\Ua} \arrow[ru, "id"'] &                                               
\end{tikzcd}\]
commutes, that is, $id_{\imageObj \delta^{-1}_{\Ua}} \circ s = s$. But, 
\[ s \circ j \circ r \circ s = s \circ j \circ i = s \circ id_{\imageObj \delta^{-1}_{\Ua}} = s \]
so that $s \circ j \circ r = id_{\imageObj \delta^{-1}_{\Ua}}$. Therefore, $\colim_{\Ua \in \Cov(X)} \im \delta^{-1}_{\Ua} \cong F(X)$, as desired. 
\end{proof}

\body{
We will now use Observation~\ref{observation:colimit-of-kernels} and Lemma~\ref{lemma:colimit-of-images} to prove that, if $F$ is separated, then the functor $H^0(-,F) \colon \sub(G)^{\op} \to \cat{Ab}$ can be rewritten as the quotient of $F^+$ by $F$.
}

\begin{corollary}\label{corollary:cokernel}
Let $F \colon \sub(G)^{\op} \to \Set$ be a separated presheaf. For every $X \subseteq G$, we fix a cover $\Ua \in \Cov(X)$ and consider $f_X \colon F(X) \to F^+(X)$ given by 
\[
    F(X) \xrightarrow{\xi_{\Ua}} \Match(\Ua,F) \hookrightarrow \coprod_{\Ua \in \Cov(X)} \Match(\Ua,F) \xrightarrow{\pi} F^+(X)
\]
where $\pi \colon \displaystyle\coprod_{\Ua \in \Cov(X)} \Match(\Ua,F) \to F^+(X) = \coprod_{\Ua \in \Cov(X)} \Match(\Ua,F)/\sim$ is the projection map of the equivalence relation defined in \ref{colim-in-set}.
Then, we have that the zeroth \v{C}ech cohomology is the quotient
$$H^0(X,F) = \Z(F^+(X)) / \Z(\im f_X)$$
\end{corollary}

\body{
    The proof of Corollary~\ref{corollary:cokernel} is rather algebraic and thus mostly relevant to readers comfortable with sheaf theory. For this reason and since the proof techniques actually yield a much more general result, we will only give a proof of this stronger theorem (Theorem~\ref{thm:cokernel} below) from which one can easily deduce Corollary~\ref{corollary:cokernel} as a special case.
}

\begin{theorem}\label{thm:cokernel}
Suppose $F$ is an Abelian presheaf on a site $(\cat{C}, J)$ where $J$ is a finitely generated Grothendieck topology. If $F$ is separated, then, letting $\eta \colon F \Rightarrow F^+$ be the unit of the adjunction given by sheafification, one has that $H^0(-,F) = \coker (F \xrightarrow{\eta_F} F^+)$. 
\end{theorem}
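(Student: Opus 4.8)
The plan is to fix an object $x \in \cat{C}$, prove the isomorphism $H^0(x,F) \cong \coker(\eta_{F,x})$ by a colimit-interchange argument, and then observe that every construction involved is functorial in $x$. To unwind the left-hand side, recall that $H^0(x,F) = \colim_{\Ua \in \Cov(x)} H^0(x,\Ua,F)$ and, by Definition~\ref{def:cohomology-cover-specific}, that each $H^0(x,\Ua,F) = \coker\bigl(\im\delta^{-1}_{\Ua} \to \ker\delta^0_{\Ua}\bigr)$, the map being the canonical one arising because $\delta^{-1}_{\Ua} \colon F(x) \to \prod_i F(u_i)$ factors through $\ker\delta^0_{\Ua}$. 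The assignments $\Ua \mapsto \im\delta^{-1}_{\Ua}$ and $\Ua \mapsto \ker\delta^0_{\Ua}$ are functors $\Cov(x) \to \cat{A}$ and the inclusions between them are natural in $\Ua$; since a cokernel is itself a colimit, and colimits commute with colimits, this yields
\[
H^0(x,F) \;\cong\; \coker\Bigl( \colim_{\Ua \in \Cov(x)}\im\delta^{-1}_{\Ua} \;\longrightarrow\; \colim_{\Ua \in \Cov(x)}\ker\delta^0_{\Ua} \Bigr).
\]

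Next I would identify the two colimits and the map between them. By Observation~\ref{observation:colimit-of-kernels} together with the isomorphism $\cat{Match}(\Ua,F) \cong \ker\delta^0_{\Ua}$, the target is $F^+ x$; by Lemma~\ref{lemma:colimit-of-images}, which is exactly where separatedness enters, the source is $F(x)$, and in fact the proof of that lemma supplies an isomorphism $F(x) \cong \im\delta^{-1}_{\Ua}$ for each $\Ua$, compatibly with refinement. The remaining point, which I expect to be the main obstacle, is to check that under these identifications the induced map $F(x) \to F^+ x$ is the unit component $\eta_{F,x}$. For this I would argue cover-wise: the isomorphism $F(x) \xrightarrow{\sim} \im\delta^{-1}_{\Ua}$ followed by the inclusion $\im\delta^{-1}_{\Ua} \hookrightarrow \ker\delta^0_{\Ua}$ is precisely the map $\xi_{\Ua}$ of Definition~\ref{def:sheaf} --- because $\delta^{-1}_{\Ua}$ is monic in the balanced category $\cat{A}$, its image as a subobject of $\prod_i F(u_i)$ coincides with the image of $\xi_{\Ua}$ inside $\ker\delta^0_{\Ua}$, while $\delta^{-1}_{\Ua}$ factors as $\xi_{\Ua}$ followed by the inclusion $\ker\delta^0_{\Ua} \hookrightarrow \prod_i F(u_i)$. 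Postcomposing $\xi_{\Ua}$ with the colimit coprojection $\ker\delta^0_{\Ua} \to F^+ x$ is, by the standard description of the unit of the plus construction, exactly $\eta_{F,x}$; passing to the colimit over $\Cov(x)$ then shows the induced map is $\eta_{F,x}$, whence $H^0(x,F) \cong \coker(\eta_{F,x})$.

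Finally, for naturality, I would observe that every ingredient above --- the Čech nerve of a cover, the coboundary maps $\delta^{\bullet}_{\Ua}$, their kernels and images, the poset $\Cov(x)$, and the colimits --- is functorial in $x$ in the standard way (pulling covers back along morphisms of $\cat{C}$, as in the definition of $F^+$ on morphisms), while $\eta$ is a natural transformation by construction. Hence the cover-wise isomorphisms assemble into an isomorphism of presheaves $H^0(-,F) \cong \coker(F \xrightarrow{\eta_F} F^+)$, as claimed. The only genuinely fiddly part is keeping the several image and kernel subobjects and the colimit coprojections coherent with one another so that the induced map really is $\eta_F$; the colimit-interchange itself and the identification of the two colimits with $F(x)$ and $F^+ x$ are routine given Lemma~\ref{lemma:colimit-of-images} and Observation~\ref{observation:colimit-of-kernels}.
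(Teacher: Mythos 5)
Your proposal is correct and follows essentially the same route as the paper's proof: the same colimit-interchange of the cokernel past $\colim_{\Ua \in \Cov(x)}$, the same identification of the source with $F(x)$ via Lemma~\ref{lemma:colimit-of-images} (where separatedness is used) and of the target with $F^+x$ via Observation~\ref{observation:colimit-of-kernels}. Your cover-wise verification that the induced map is really $\eta_{F,x}$, and the remark on functoriality in $x$, are points the paper asserts more briefly, but they do not constitute a different argument.
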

\begin{proof}
We have
    \begin{align*}
        H^0(X,F) &\defeq \colim_{\Ua \in \Cov(X)} H^0(X, \Ua, F) &\text{(Equation~\eqref{eqn:def:cover-indep-cohomology})} \\
        &= \colim_{\Ua \in \Cov(X)} \coker \bigl( \im \delta^{-1} \xrightarrow{\xi^0_{\Ua}} \ker \delta^0_{\Ua} \bigr) &\text{(definition of } H^0) \\
        &= \coker \colim_{\Ua \in \Cov(X)} \bigl( \im \delta^{-1} \xrightarrow{\xi^0_{\Ua}} \ker \delta^0_{\Ua} \bigr) &\text{(colimits commute)} \\
        &= \coker \bigl( \colim_{\Ua \in \Cov(X)} \im \delta^{-1} \xrightarrow{\colim_{\Ua \in \Cov(X)} \xi^0_{\Ua}} \colim_{\Ua \in \Cov(X)} \ker \delta^0_{\Ua} \bigr) \\
        &= \coker \bigl( F X \xrightarrow{ \colim_{\Ua \in \Cov(X)} \xi^0_{\Ua}} \colim_{\Ua \in \Cov(X)} \ker \delta^0_{\Ua} \bigr) &\text{(Lemma~\ref{lemma:colimit-of-images})} \\
        &= \coker \bigl( F X \xrightarrow{ \colim_{\Ua \in \Cov(X)} \xi^0_{\Ua}} F^+ X \bigr) &\text{(Observation~\ref{observation:colimit-of-kernels})} \\
    \end{align*}
The morphisms $\colim_{\Ua \in \Cov(X)} \xi^0_{\Ua}$ are the components of a natural transformation $\eta \colon F \Rightarrow F^+$. Moreover, this is precisely the definition of the unit of the sheafification functor. Thus, our previous derivation allows us to restate the functor $H^0(-,F)$ as a cokernel of $\eta$, as desired. 
\end{proof}

\body{
    In the example of Proposition~\ref{prop: vert cover < k is not sheaf}, $\Va_{\leq k}$ failed to be a sheaf when the union of local solutions was ``too big''. We will see now that this case is the only obstruction we have for this particular presheaf. First, we prove a result that will help even more with our calculation of the zeroth cohomology. Since all the problems we work with in this paper are separated presheaves, they are sub-presheaves of their sheafification (i.e. the unit of the adjunction is monic). In other words, one has that the set of solutions $F(H)$ is canonically a subset of $F^+(H)$. With this in mind, the next result shows that, when dealing with presheaves of sets, which are Abelianized freely, the quotient of $F^+(H)/F(H)$ is equivalent to the Abelianization of the difference $F^+(H) - F(H)$.
}

\begin{lemma} \label{coker-set-complement}
    Let $Y$ be a set and $B \subseteq Y$ a subset. If $f: B \hookrightarrow Y$ is the inclusion, then the quotient $\Z[Y] / \Z[\im f]$ is isomorphic to $\Z[Y - B]$.
\end{lemma} 

\begin{proof}
    To make the writing simpler, we denote $W:= \Z[\im f]$. We define 
    \[ \begin{array}{ccccc}
    & h: & \Z[Y] / W & \longrightarrow         &  \Z[Y - B]\\
    \\
    &         &     \displaystyle\sum_{y \in Y} a_y \cdot y + W        & \longmapsto &  \displaystyle\sum_{y \in Y-B} a_y \cdot y
    \end{array} \]
    First, let's see that $h$ is well defined. Let $\sum_{y \in Y} a_y \cdot y + W = \sum_{y \in Y} b_y \cdot y + W$. We have that $0 \in W$, so there exists $\sum_{y \in B} c_y \cdot y \in W$ such that
    \[ \sum_{y \in Y} a_y \cdot y + 0 = \sum_{y \in Y} b_y \cdot y + \sum_{y \in B} c_y \cdot y \]
    We can rewrite that as
    \[ \displaystyle\sum_{y \in B} a_y \cdot y + \displaystyle\sum_{y \in Y-B} a_y \cdot y = \displaystyle\sum_{y \in B} (b_y + c_y) \cdot y + \displaystyle\sum_{y \in Y-B} b_y \cdot y\]
    so that $\sum_{y \in Y-B} a_y \cdot y = \sum_{y \in Y-B} b_y \cdot y$, which means that $\sum_{y \in Y} a_y \cdot y + W$ and $\sum_{y \in Y} b_y \cdot y + W$ are sent by $h$ to the same element.

    Let's check that $h$ is injective. Suppose that $\sum_{y \in Y-B} a_y \cdot y = \sum_{y \in Y-B} b_y \cdot y$, let's see that $\sum_{y \in Y} a_y \cdot y + W = \sum_{y \in Y} b_y \cdot y + W$. Let $\sum_{y \in Y} a_y \cdot y + \sum_{y \in B} c_y \cdot y$ be an arbitrary element of $\sum_{y \in Y} a_y \cdot y + W$, we consider $\sum_{y \in B} (a_y +c_y - b_y) \cdot y \in W$, so that
    \[ \displaystyle\sum_{y \in Y} b_y \cdot y + \displaystyle\sum_{y \in B} (a_y +c_y - b_y) \cdot y = \displaystyle\sum_{y \in B} (a_y + c_y) \cdot y + \displaystyle\sum_{y \in Y-B} b_y \cdot y \]
    and using the hypothesis we have
    \[\displaystyle\sum_{y \in B} (a_y + c_y) \cdot y + \displaystyle\sum_{y \in Y - B} b_y \cdot y = \displaystyle\sum_{y \in B} (a_y + c_y) \cdot y + \displaystyle\sum_{y \in Y-B} a_y \cdot y = \displaystyle\sum_{y \in Y} a_y \cdot y + \displaystyle\sum_{y \in B} c_y \cdot y \]
    and thus we proved that $\sum_{y \in Y} a_y \cdot y + W = \sum_{y \in Y} b_y \cdot y + W$.

    Finally, let's check that $h$ is surjective. Given $\sum_{y \in Y-B} a_y \cdot y$, we define $\sum_{y \in Y} b_y \cdot y \in \Z[Y]$ as
    \[b_y= \left\{ \begin{array}{ll}
    a_y , & \text{if } y \in Y-B\\
    0 , & \text{if } y \in Y
\end{array} \right.\]
    and we have that $h(\sum_{y \in Y} b_y \cdot y) = \sum_{y \in Y-B} a_y \cdot y$.
\end{proof}

\begin{observation}
    Notice that the fact that $F^+(H)/F(H)$ is equivalent to the Abelianization of $F^+(H) - F(H)$ is possible precisely because in this paper we are considering free Abelianizations of \textit{presheaves of sets}. We leave it as exciting future work to consider Abelian presheaves where the algebraic structure is not free.
\end{observation}

\body{We now have all tools to finally characterize the zeroth \v{C}ech cohomology for \textsc{VertexCover}.}

\begin{proposition} \label{h0-vertex-cover}
    If $k\geq 2$, then $H^0(X,\Va_{\leq k}) = \Z[\{A \in \Va(X) : |A| > k\}]$. 
\end{proposition}

\begin{proof}
    We fix a cover $\Ua = \{X_i\}_{i \leq m} \in \Cov(X)$ and we consider $f_X \colon \Va_{\leq k}(X) \to \Va(X)$ given by 
    \[ \Va_{\leq k}(X) \xrightarrow{\xi_{\Ua}} \Match(\Ua,\Va_{\leq k}) \hookrightarrow \coprod_{\Ua \in \Cov(X)} \Match(\Ua,\Va_{\leq k}) \xrightarrow{\pi} \Va_{\leq k}^+(X) \cong \Va(X)\]
     On an object $X \in \sub(G)$, this function identifies sections that agree locally on some cover of $X$. The action of $f_X$ is given by
     \[ A \in \Va_{\leq k}(X) \xmapsto{\xi_{\Ua}} (A\restrict_{X_{1}},...,A\restrict_{X_{m}}) \hookrightarrow (A\restrict_{X_{1}},...,A\restrict_{X_{m}}) \xmapsto{\pi} [(A\restrict_{X_{1}},...,A\restrict_{X_{m}})] \xmapsto{\cong} \bigcup_{i \leq m} A\restrict_{X_{i}} = A\]
    Thus, $f_X$ can be seen as the inclusion $f_X \colon \Va_{\leq k}(X) \hookrightarrow \Va(X)$.
    
    Defining $Y = \Va(X)$ and $B = \Va_{\leq k}(X)$, we have that 
    $$\Z[Y-B] = \Z[\{A \in \Va(X): |A|>k\}].$$ We then use Theorem~\ref{thm:cokernel} and Lemma~\ref{coker-set-complement} to obtain the desired result.
\end{proof}

\body{
Summarizing briefly, as we mentioned earlier, $H^{0}$ intuitively collects all the obstructions to lavishness. Based on the example of Proposition~\ref{prop: vert cover < k is not sheaf}, we argued informally that these obstructions should only have to do with cardinality: namely, in the case of \textsc{VertexCover} the only obstructions to algorithmic compositionality are the sizes of the associated sets. We showed this formally by proving that $H^0(-, \mathcal{V}_{\leq k})$ is exactly $\Z[\{A \in \Va: |A|>k\}]$. However, these are not the only kinds of obstructions that $H^0$ can detect: in the following subsection we will see that, along with detecting ``size issues'', $H^0$ also detects other kinds of \textit{emergent phenomena}. For example, in the case of \textsc{CycleCover} we can divide a graph in pieces in such way that $H^0$ will detect cycles that are not visible locally by pieces that are small. 
}

\subsection{More computational problems as examples}

\body{In this section, we spell out another two concrete examples and consider their cohomology.  Furthermore, since we use constructions similar to those employed for \textsc{VertexCover}, we will employ some general category-theoretic tools that, as biproduct, can be easily seen to yield a great many more examples of computational problems as presheaves on $\sub(G)$. Since these examples would all have a similar flavor, we limit ourselves to giving concrete descriptions of \textsc{VertexCover}, \textsc{CycleCover} and \textsc{BipartiteCover}.}

\subsection{Categorical Generalities to Construct Presheaves from Monotone Graph Properties}

\begin{definition}
    Given a category $\cat{C}$ and a fixed object $X$ of $\cat{C}$, we say that an object $Y$ of $\cat{C}$ is a \textbf{subobject} of $X$ if there exists in $\cat{C}$ a monic arrow $f \colon Y \hookrightarrow X$.
\end{definition}

\begin{definition}
    A functor $p \colon \mathbb{P} \hookrightarrow \cat{C}$ is \textbf{pullback-absorbing} if, given any pullback square of the following form,
    \[
\begin{tikzcd}
p(X) \times_Y Z \arrow[r, hook] \arrow[d, hook] & Z \arrow[d, hook] \\
p(X) \arrow[r, hook]                            & Y                
\end{tikzcd}\]
    there exists an object $X' \in \mathbb{P}$ such that $p(X') \cong p(X) \times_Y Z$.
\end{definition}

\body{We intend to use this language in the following way: we wish to use the functor $p \colon \mathbb{P} \hookrightarrow \cat{C}$ to talk about a property that some objects of $\cat{C}$ satisfy. Given a property $P$, we see $\mathbb{P}$ as the category of objects of $\cat{C}$ that satisfy $P$, and $p(X)$ will denote that $X$ satisfies the property $P$.}

\body{Intuitively a property $p$ is pullback-absorbing if for all $Y \in \cat{C}$, the pullback of two subobjects of $Y$ satisfies the property as long as one of the subobjects of $Y$ satisfies the property. In the case $\cat{C} = \sub(G)$, this translates to \textit{monotonicity under subobjects}: if an object $Y$ satisfies the property $p$, all the subobjects of $Y$ also satisfy $p$.}

\begin{lemma}
    If $\cat{C} = \sub(G)$, then $p$ is pullback-absorbing if and only if $\mathbb{P}$ is monotone under the subgraph relation. 
\end{lemma}

\begin{proof}
    $(\Rightarrow)$ Let $Y=p(Y) \in \mathbb{P}$ and $f \colon Z \hookrightarrow Y$ be a subobject of $y$. Consider the following pullback 
    \[
    \begin{tikzcd}
    p(Y) \times_Y Z \arrow[r, hook] \arrow[d, hook] & Z \arrow[d, "f", hook] \\
    p(Y) \arrow[r, "id"', hook]                     & p(Y)=Y                
    \end{tikzcd}\]
    we have that $id\colon Z \to Z$ and $f \colon Z \hookrightarrow Z$ make the following commute
    \[
\begin{tikzcd}
Z \arrow[r, "id", hook] \arrow[d, "f"', hook] & Z \arrow[d, "f", hook] \\
p(Y) \arrow[r, "id"', hook]                   & p(Y)=Y                
\end{tikzcd}\]
    so that there exists a unique $k \colon Z \to p(Y) \times_Y Z$ such that the whole diagram commutes
    \[
\begin{tikzcd}
Z \arrow[rrd, "id", bend left] \arrow[rdd, "f"', bend right] \arrow[rd, "k", dashed] &                                                 &                        \\
                                                                                     & p(Y) \times_Y Z \arrow[r, hook] \arrow[d, hook] & Z \arrow[d, "f", hook] \\
                                                                                     & p(Y) \arrow[r, "id"', hook]                     & p(Y)=Y                
\end{tikzcd}\]
    Therefore, $p(Y) \times_Y Z = Z$, and because $p$ is pullback-absorbing, we have that $p(Y) \times_Y Z = p(Z)$.

    $(\Leftarrow)$ When we consider the pullback
    \[
\begin{tikzcd}
p(X) \times_Y Z \arrow[r, hook] \arrow[d, hook] & Z \arrow[d, hook] \\
p(X) \arrow[r, hook]                            & Y                
\end{tikzcd}\]
we have that $p(X) \times_Y Z$ is a subobject of $p(X)$, so, by hypothesis, it also satisfies $p$.
\end{proof}

\begin{lemma} \label{functor} If $p_1 \colon \mathbb{P}_1 \hookrightarrow \sub(G)$ and $p_2 \colon \mathbb{P}_2 \hookrightarrow \sub(G)$ are pullback-absorbing functors, then the following is a presheaf
\[\begin{array}{ccccc}
    & \sub_{\mathbb{P}_{1,2}}(-) \colon & \sub(G)^{op} & \longrightarrow         &  \Set\\
    \\
    &         & H              & \longmapsto &  \{H' \subseteq H: H' \in \mathbb{P}_1 \text{ and } H-H' \in \mathbb{P}_2\}
    \end{array}\]
    and given $f \colon H \hookrightarrow K$, we put
\[\begin{array}{ccccc}
    & \sub_{\mathbb{P}_{1,2}}(f) \colon & \sub_{\mathbb{P}_{1,2}}(K) & \longrightarrow         &  \sub_{\mathbb{P}_{1,2}}(H)\\
    \\
    &         & K'              & \longmapsto &  K'\cap H
    \end{array}\]
    where $K' \cap H$ is the pullback of $K' \hookrightarrow K$ and $H \hookrightarrow K$ in the category $\sub(G)$. 
\end{lemma}

\begin{proof}
    We have that $\sub_{\mathbb{P}_{1,2}}(-)$ is well defined on the arrows because $(K - K') \cap H = H - (K' \cap H)$ and because $p_1$ and $p_2$ are pullback-absorbing, so that $K' \cap H \in \mathbb{P}_1$ and $H - (K' \cap H) \in \mathbb{P}_2$. Also, it is a functor, because if $H \hookrightarrow J \hookrightarrow K$ and
    \[
\begin{tikzcd}
L =K' \times_K J \arrow[r, hook] \arrow[d, hook] & J \arrow[d, hook] &  & L \times_J H \arrow[r, hook] \arrow[d, hook] & H \arrow[d, hook] \\
K' \arrow[r, hook]                            & K                 &  & L \arrow[r, hook]                            & J                
\end{tikzcd}\]
    are pullbacks, then the bigger square
    \[
\begin{tikzcd}
L \times_J H \arrow[r, hook] \arrow[d, hook] & H \arrow[d, hook] \\
L =K' \times_K J \arrow[r, hook] \arrow[d, hook] & J \arrow[d, hook] \\
K' \arrow[r, hook]                            & K                
\end{tikzcd}\]
    is a pullback, which is the same as saying that $K' \cap H = K' \times_K H = (K' \times_K J) \times_J H = (K' \cap J)\cap H$ and means that the following commutes
    \[
\begin{tikzcd}
\sub_{\mathbb{P}_{1,2}}(K) \arrow[r] \arrow[rr, bend right] & \sub_{\mathbb{P}_{1,2}}(J) \arrow[r] & \sub_{\mathbb{P}_{1,2}}(H)
\end{tikzcd}\]
    And since $H' \cap H = H'$, for $H' \subseteq H$, we also have that $\sub_{\mathbb{P}_{1,2}}(id_H) = id_{\sub_{\mathbb{P}_{1,2}}(H)}$.
\end{proof}

\begin{lemma} \label{separated}
    $\sub_{\mathbb{P}_{1,2}}(-)$ is a separated presheaf.
\end{lemma}

\begin{proof}
    Let $H$ be a subgraph of $G$, $\{H_i\}_{i \in I}$ a cover of $H$ and $H_1,H_2 \in \sub_{\mathbb{P}_{1,2}}(H)$, ie, $H_1,H_2 \subseteq H$ such that $H_1,H_2 \in \mathbb{P}_{1}$, and suppose that $H_1\restrict_{H_{i}} = H_2\restrict_{H_{i}}$ for all $i \in I$. Let us see that $H_1=H_2$. If $v \in V(H_1) \subseteq V(H)$, then there exists $i$ such that $v \in V(H_i)$, so that $v \in V(H_1)\restrict_{H_{i}} = V(H_2)\restrict_{H_{i}}$, therefore $v \in V(H_1)$. We can prove that all edges of $H_1$ are in $H_2$ using the same argument and the inclusion $H_2 \subseteq H_1$ is analogous.
\end{proof}

\begin{remark} \label{rmk: one-property-pullback}
    For the purpose of this work, having in mind the examples we will be working on, we defined the previous functor with a property for the complement of graphs, but we note that if we define $H \mapsto \{H' \subseteq H: H' \in \mathbb{P}\}$, with only one property about the subgraphs, this also results in a separated presheaf, as long as $p \colon \mathbb{P} \hookrightarrow \sub(G)$ is pullback-absorbing, since we can put $\mathbb{P}_2 = \sub(G)$.
\end{remark}

\body{For \textsc{VertexCover}, we define $\mathbb{V} \subseteq \sub(G)$  as $H \in \mathbb{V}$ iff $H$ is edgeless; and given a $k \in \mathbb{N}$, we define $\mathbb{V}_k \subseteq \sub(G)$ as $H \in \mathbb{V}_k$ iff $H$ is edgeless and $|H| \leq k$. We have that $\mathbb{V}$ and $\mathbb{V}_{k}$ are monotone under subobjects.} \label{property mathbbV}

\body{Notice that $\Va_{\leq k}$ can be alternatively defined as
\[\begin{array}{ccccc}
    & \mathcal{V}_{\leq k} \colon & \sub(G)^{\op} & \longrightarrow         &  \Set\\
    \\
    &         & H              & \longmapsto &  \{H' \subseteq H: H' \in \mathbb{V}_k\text{ and } H - H' \in \mathbb{V}\}
    \end{array}\]
    and the relationship between the set of solutions of subgraphs is described as follows: given $f \colon H \hookrightarrow K$, we put
\[\begin{array}{ccccc}
    & \Va_{\leq k}(f) \colon & \Va_{\leq k}(K) & \longrightarrow         &  \Va_{\leq k}(H)\\
    \\
    &         & K'              & \longmapsto &  K'\cap H
    \end{array}\]}

\begin{lemma} \label{vc-k}
    $\Va_{\leq k}$ is a separated functor.
\end{lemma}

\begin{proof}
    Since $\mathbb{V}$ and $\mathbb{V}_k$ are monotone under subobjects, the result follows from Lemma~\ref{functor} and Lemma~\ref{separated}.
\end{proof}

\subsection{More Examples: \textsc{CycleCover} and \textsc{OddCycleTransversal}}

\begin{definition}
    Given a graph $G = (V(G),E(G))$, we say that $S \subseteq V(G)$ is a \textbf{cycle cover of} $G$ if $G - S$ is a forest, i.e., it does not have cycles. We say that $S$ is a \textbf{minimum cycle cover} if, for every cycle cover $R$, $|S| \leq |R|$.
\end{definition}

\body{We can state the cycle cover problem as}

\vspace{0.4cm}

\fbox{\begin{minipage}{41em}
$\textsc{CycleCover}$

\textbf{Input:} A graph $G$ and an integer $k$.

\textbf{Task:} Decide if $G$ admits a cycle cover of cardinality at most $k$.
\end{minipage}}

\body{We will use the same property of \ref{property mathbbV}, $\mathbb{V}_k$, that relates to being a set of vertices of a given cardinality. But for talking about cycle covers, we need another property: we define $\mathbb{C} \subseteq \sub(G)$ as $H \in \mathbb{C}$ iff $H$ doesn't have a cycle. We have that $\mathbb{C}$ is monotone under subobjects.}

\body{We define
\[\begin{array}{ccccc}
    & \mathscr{C}_{\leq k} \colon & \sub(G)^{\op} & \longrightarrow         &  \Set\\
    \\
    &         & H              & \longmapsto &  \{H' \subseteq H: H' \in \mathbb{V}_{k} ~\text{and }H-H' \in \mathbb{C}\}
    \end{array}\]
and given $f\colon H \hookrightarrow K$, we put
\[\begin{array}{ccccc}
    & \mathscr{C}_{\leq k}(f)\colon & \mathscr{C}_{\leq k}(K) & \longrightarrow         &  \mathscr{C}_{\leq k}(H)\\
    \\
    &         & K'              & \longmapsto &  K' \cap H
    \end{array}\]
}

\begin{lemma}
    $\mathscr{C}_{\leq k}$ is a separated functor.
\end{lemma}

\begin{proof}
    Since $\mathbb{V}_k$ and $\mathbb{C}$ are monotone under subobjects, the result follows from Lemma~\ref{functor} and Lemma~\ref{separated}.
\end{proof}

\begin{lemma}
    $\mathscr{C}_{\leq k}$ is not a sheaf.
\end{lemma}

\begin{proof}

Just like in the case of vertex covers, the only possible gluing for a matching family is the union. What might happen is that the union doesn't need to be a cycle cover as we see in this example: let's consider $H = K_3$ and the following cover $\{H_1, H_2\}$:
\[
\tikzset{every picture/.style={line width=0.75pt}} 
\begin{tikzpicture}[x=0.75pt,y=0.75pt,yscale=-1,xscale=1]

\draw    (212,66) -- (248.6,147.6) ;
\draw    (212,66) -- (166.6,140.6) ;
\draw [shift={(166.6,140.6)}, rotate = 121.32] [color={rgb, 255:red, 0; green, 0; blue, 0 }  ][fill={rgb, 255:red, 0; green, 0; blue, 0 }  ][line width=0.75]      (0, 0) circle [x radius= 3.35, y radius= 3.35]   ;
\draw [shift={(212,66)}, rotate = 121.32] [color={rgb, 255:red, 0; green, 0; blue, 0 }  ][fill={rgb, 255:red, 0; green, 0; blue, 0 }  ][line width=0.75]      (0, 0) circle [x radius= 3.35, y radius= 3.35]   ;
\draw    (166.6,140.6) -- (248.6,147.6) ;
\draw [shift={(248.6,147.6)}, rotate = 4.88] [color={rgb, 255:red, 0; green, 0; blue, 0 }  ][fill={rgb, 255:red, 0; green, 0; blue, 0 }  ][line width=0.75]      (0, 0) circle [x radius= 3.35, y radius= 3.35]   ;
\draw    (423,56) -- (459.6,137.6) ;
\draw [shift={(459.6,137.6)}, rotate = 65.84] [color={rgb, 255:red, 0; green, 0; blue, 0 }  ][fill={rgb, 255:red, 0; green, 0; blue, 0 }  ][line width=0.75]      (0, 0) circle [x radius= 3.35, y radius= 3.35]   ;
\draw    (423,56) -- (377.6,130.6) ;
\draw [shift={(377.6,130.6)}, rotate = 121.32] [color={rgb, 255:red, 0; green, 0; blue, 0 }  ][fill={rgb, 255:red, 0; green, 0; blue, 0 }  ][line width=0.75]      (0, 0) circle [x radius= 3.35, y radius= 3.35]   ;
\draw [shift={(423,56)}, rotate = 121.32] [color={rgb, 255:red, 0; green, 0; blue, 0 }  ][fill={rgb, 255:red, 0; green, 0; blue, 0 }  ][line width=0.75]      (0, 0) circle [x radius= 3.35, y radius= 3.35]   ;
\draw    (374.6,174.6) -- (456.6,181.6) ;
\draw [shift={(456.6,181.6)}, rotate = 4.88] [color={rgb, 255:red, 0; green, 0; blue, 0 }  ][fill={rgb, 255:red, 0; green, 0; blue, 0 }  ][line width=0.75]      (0, 0) circle [x radius= 3.35, y radius= 3.35]   ;
\draw [shift={(374.6,174.6)}, rotate = 4.88] [color={rgb, 255:red, 0; green, 0; blue, 0 }  ][fill={rgb, 255:red, 0; green, 0; blue, 0 }  ][line width=0.75]      (0, 0) circle [x radius= 3.35, y radius= 3.35]   ;

\draw (135,138) node [anchor=north west][inner sep=0.75pt]   [align=left] {$\displaystyle v_{1}$};
\draw (218,38) node [anchor=north west][inner sep=0.75pt]   [align=left] {$\displaystyle v_{3}$};
\draw (257,145) node [anchor=north west][inner sep=0.75pt]   [align=left] {$\displaystyle v_{2}$};
\draw (470,77) node [anchor=north west][inner sep=0.75pt]   [align=left] {$\displaystyle H_{1}$};
\draw (478,180) node [anchor=north west][inner sep=0.75pt]   [align=left] {$\displaystyle H_{2}$};
\end{tikzpicture}
\]
Now, we have that the empty set is a solution for the subgraphs $H_1$ and $H_2$, but the union is empty, which is not a solution for $H$.
\end{proof}

\body{Let's think about the zeroth presheaf-\v{C}ech cohomology in the example given in the last proof. Calculating the matching families (i.e. $\ker \delta^0$) and the restrictions of $\mathscr{C}(H)$ (i.e. $\im\delta^1$), we have that $\{\emptyset, ..., \emptyset\}$ is the only matching family that doesn't lift to a global solution. This corresponds to the fact that $H^0$ is detecting the presence of an \textit{emergent cycle}: the two graphs $H_1$ and $H_2$ do not contain cycles; and yet, their union does.}

\body{Since $\mathscr{C}_{\leq k}$ is separated, we can characterize its zeroth presheaf-\v{C}ech cohomology as we did for \textsc{VertexCover}. First, we need to talk about the sheafification of $\mathscr{C}_{\leq k}$.}

\begin{proposition}
    For $k \geq 2$, the sheafification of $\mathscr{C}_{\leq k}$ is given by
\[\begin{array}{ccccc}
    & \sub(-) \colon & \sub(G)^{\op} & \longrightarrow         &  \Set\\
    \\
    &         & H              & \longmapsto &  \{H': H' \subseteq H\}
    \end{array}\]
and given $f\colon H \hookrightarrow K$, we put
\[\begin{array}{ccccc}
    & \sub(f)\colon & \sub(K) & \longrightarrow         &  \sub(H)\\
    \\
    &         & K'              & \longmapsto &  K'\cap H
    \end{array}\]
\end{proposition}

\begin{proof}
    Given $H \subseteq G$, a cover $\bigcup_{i \in I} H_i = H$ and $\{H'_i \in \mathscr{C}_{\leq k}(H_i)\}_{i \in I}$ a matching family of cycle covers, we have $\bigcup_{i \in I} H'_i \in \sub(H)$. Moreover, given $\alpha = K \in E(H)$ an edge, if we consider $f: K \hookrightarrow H$, then 
    $$\sub(f)(H') = H' \cap K, ~~\forall H' \in \sub(H).$$
    Since $K$ is an edge, we have that $H' \cap K$ is a cycle cover of $K$ and $|H' \cap K| \leq 2 \leq k$, so that $\sub(f)(H') \in \mathscr{C}_{\leq k}(H)$. By Proposition~\ref{prop:chac-sheafification}, we have that $\sub(-)$ is the sheafification of $\mathscr{C}_{\leq k}.$
\end{proof}

\begin{proposition} \label{h0-cycle-cover}
    $H^0(X,\mathscr{C}_{\leq k}) = \Z[\{X' \subseteq X: X' \notin \mathbb{V}_k ~\text{or}~ X-X' \notin \mathbb{C}\}]$. 
\end{proposition}
\begin{proof}
    We fix a cover $\Ua = \{X_i\}_{i \leq m} \in \Cov(X)$ and we consider $f_X \colon \Va_{\leq k}(X) \to \Va(X)$ given by 
    \[ \mathscr{C}_{\leq k}(X) \xrightarrow{\xi_{\Ua}} \Match(\Ua,\mathscr{C}_{\leq k}) \hookrightarrow \coprod_{\Ua \in \Cov(X)} \Match(\Ua,\mathscr{C}_{\leq k}) \xrightarrow{\pi} \mathscr{C}_{\leq k}^+(X) \cong \sub(X)\]
    As was the case for \textsc{VertexCover} in Proposition~\ref{h0-vertex-cover}, we have that $f_X$ can be seen as the inclusion $f_X \colon \mathscr{C}_{\leq k}(X) \hookrightarrow \sub(X)$.
    
    Defining $Y = \sub(X)$ and $B = \mathscr{C}_{\leq k}(X)$, we have that 
    $$\Z[Y-B] = \Z[\{X' \subseteq X: X' \notin \mathbb{V}_k ~\text{or}~ X-X' \notin \mathbb{C}\}].$$ We then use Theorem~\ref{thm:cokernel} and Lemma~\ref{coker-set-complement} to obtain the desired result. 
\end{proof}

\body{With this result we see that, in the case of \textsc{CycleCover}, $H^0$ can detect not only the problem with size but also emergent cycles.}

\begin{definition}
    Given a graph $G$, we say that $S \subseteq V(G)$ is a \textbf{bipartite cover of} $G$ if $G - S$ is a bipartite graph. We say that $S$ is a \textbf{minimum bipartite cover} if, for every bipartite cover $R$, $|S| \leq |R|$.
\end{definition}

\body{We can state the bipartite cover problem (also known as \textsc{OddCycleTransversal}) as follows.}

\vspace{0.4cm}

\fbox{\begin{minipage}{41em}
$\textsc{BipartiteCover}$

\textbf{Input:} A graph $G$ and an integer $k$.

\textbf{Task:} Decide if $G$ admits a bipartite cover of cardinality at most $k$.
\end{minipage}}

\body{Again, we will use the property $\mathbb{V}_{\leq k}$ presented in \ref{property mathbbV}. We also define other property $\mathbb{B}$ as follows: $H \in \mathbb{B}$ iff $H$ is bipartite. We have that $\mathbb{B}$ is monotone under subobjects.}

\body{We define
\[\begin{array}{ccccc}
    & \mathscr{B}_{\leq k}\colon & \sub(G)^{\op} & \longrightarrow         &  \Set\\
    \\
    &         & H              & \longmapsto &  \{H' \subseteq H: H' \in \mathbb{V}_{k} ~\text{and }H-H' \in \mathbb{B}\}
    \end{array}\]
and given $f\colon H \hookrightarrow K$, we put
\[\begin{array}{ccccc}
    & \mathscr{B}_{\leq k}(f)\colon & \mathscr{B}_{\leq k}(K) & \longrightarrow         &  \mathscr{B}_{\leq k}(H)\\
    \\
    &         & K'              & \longmapsto &  K' \cap H
    \end{array}\]
}

\body{And the next results follow as in the case of \textsc{CycleCover}, including its sheafification and the example that $\mathscr{C}_{\leq k}$ is not a sheaf.}

\begin{lemma}
    $\mathscr{B}_{\leq k}$ is a separated functor but not a sheaf.
\end{lemma}

\begin{lemma}
    $H^0(X,\mathscr{B}_{\leq k}) = \Z[\{X' \subseteq X: X' \notin \mathbb{V}_k ~\text{or}~ X-X' \notin \mathbb{B}\}]$.
\end{lemma}

\subsection{The Compositionality of $H^0$}

\body{
We saw that the zeroth presheaf \v{C}ech cohomology provides the information of the obstructions to naive algorithmic compositionality: i.e., the obstructions to the success of naive dynamic programming algorithms. Given a computational problem $F$, it is natural to ask if these failures, namely $H^0(-, F)$ itself, admit some form of well-behaved compositionality. In other words, we ask: "how close to being a sheaf can $H^0$ itself be?"
}

\body{
We will see that although $H^0(-, F)$ is not a sheaf in general, if $F$ has a suitably nice structure, then $H^0(-, F)$ is a \textit{lavish} presheaf which, recalling from earlier (c.f. Definition~\ref{sep-lav-set}), is to say that it satisfies the existence condition for global section. 
}

\body{
Fixing some Abelian category $\cat{A}$ (e.g. $\cat{Ab}$ or $\cat{Vect}_{\mathbb{R}}$), we will now employ Theorem~\ref{thm:cokernel} to show that $H^0(-,F)$ is lavish for an Abelian presheaf $F \colon \sub(G)^{\op} \to \cat{A}$ whenever $F$ is \define{flasque}, meaning that its restriction maps are \textit{epimorphisms}. To this end, we will first need the following technical lemma which shows that one can distribute cokernels over pullbacks in an Abelian category. We prove this result for $\cat{A}$ \textit{any} Abelian category; the reader not familiar with the relevant categorical notions can simply think of the whole proof as concerning Abelian groups.
}
\begin{lemma}\label{lemma:cokernels-quasi-distribute-over-pullback}
In an Abelian category, for any morphism $f$ into a pullback as shown in the following diagram,  
\[\begin{tikzcd}[column sep=small]
	&&& L \\
	Z && {L\times_MR} && M \\
	&&& R
	\arrow["{\ell_1}", from=2-3, to=1-4]
	\arrow["{\ell_2}", from=1-4, to=2-5]
	\arrow["{r_1}"', from=2-3, to=3-4]
	\arrow["{r_2}"', from=3-4, to=2-5]
	\arrow["f"', from=2-1, to=2-3]
\end{tikzcd}\]
there is an epimorphism $u:\coker f \twoheadrightarrow \coker(\ell_1 f) \times_{\coker(\ell_2\ell_1 f)} \coker(r_1 f)$.
\end{lemma}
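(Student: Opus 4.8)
The plan is to first construct the comparison map $u$ as the map into the pullback given by its universal property, and then to prove $u$ epic by a diagram chase (legitimate for this finite diagram in an Abelian category by the Freyd--Mitchell embedding theorem, or by the calculus of members).

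For the construction, write $q\colon L\times_M R\to\coker f$, $q_L\colon L\to\coker(\ell_1 f)$, $q_R\colon R\to\coker(r_1 f)$ and $q_M\colon M\to\coker(\ell_2\ell_1 f)$ for the canonical epimorphisms, recalling that $\ell_2\ell_1 f=r_2 r_1 f$ since the square is a pullback. As $q_L\circ\ell_1\circ f=0$, the cokernel property of $q$ yields a unique $\overline{\ell_1}\colon\coker f\to\coker(\ell_1 f)$ with $\overline{\ell_1}\circ q=q_L\circ\ell_1$; symmetrically one obtains $\overline{r_1}$, and from $q_M\circ\ell_2\circ(\ell_1 f)=0$ and $q_M\circ r_2\circ(r_1 f)=0$ one obtains $\overline{\ell_2}\colon\coker(\ell_1 f)\to\coker(\ell_2\ell_1 f)$ and $\overline{r_2}\colon\coker(r_1 f)\to\coker(\ell_2\ell_1 f)$. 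Precomposing with the epimorphism $q$ and using $\ell_2\ell_1=r_2 r_1$ shows $\overline{\ell_2}\circ\overline{\ell_1}=\overline{r_2}\circ\overline{r_1}$, so the induced square commutes and the universal property of $Q:=\coker(\ell_1 f)\times_{\coker(\ell_2\ell_1 f)}\coker(r_1 f)$ produces the map $u\colon\coker f\to Q$, with components $\overline{\ell_1}$ and $\overline{r_1}$.

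To prove $u$ epic it is enough to show it is surjective on elements. Fix $(a,b)\in Q$, so $a\in\coker(\ell_1 f)$ and $b\in\coker(r_1 f)$ satisfy $\overline{\ell_2}(a)=\overline{r_2}(b)$; choose lifts $\tilde a\in L$ of $a$ and $\tilde b\in R$ of $b$. Then $\ell_2(\tilde a)-r_2(\tilde b)\in\im(\ell_2\ell_1 f)$, so pick $z\in Z$ with $\ell_2(\tilde a)-r_2(\tilde b)=\ell_2\ell_1 f(z)$ and set $\tilde a':=\tilde a-\ell_1 f(z)$. Now $\tilde a'$ still lifts $a$ (we subtracted an element of $\im(\ell_1 f)$) and $\ell_2(\tilde a')=r_2(\tilde b)$, so the pullback property provides $p\in L\times_M R$ with $\ell_1(p)=\tilde a'$, $r_1(p)=\tilde b$; then $\overline{\ell_1}(q(p))=a$ and $\overline{r_1}(q(p))=b$, i.e. $u(q(p))=(a,b)$, so $u$ is epic. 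Equivalently, and without elements: using $P=\ker\big((\ell_2,-r_2)\colon L\oplus R\to M\big)$, the same correction shows that inside $L\oplus R$ the preimage of $\im(\ell_2\ell_1 f)$ under $(\ell_2,-r_2)$ equals $P+\big(\im(\ell_1 f)\oplus\im(r_1 f)\big)$, which is precisely the assertion that $u$ is epic.

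I expect the construction of $u$ to be routine bookkeeping; the crux will be the chase, and within it the step that produces $p$. Replacing $\tilde a$ by $\tilde a'$ is genuinely necessary — the uncorrected pair $(\tilde a,\tilde b)$ need not be compatible over $M$ — and it is exactly at this step, not in the commutativity of the square, that the pullback hypothesis is used. The one other point worth a remark is that the element-chase should be read either via generalised members or, equivalently, by restricting to the small Abelian subcategory generated by this finite diagram and invoking Freyd--Mitchell. This also explains why $u$ is merely epic and not iso in general: its kernel records pairs of local data that match only after independent corrections along $f$, vanishing precisely when no such correction is needed.
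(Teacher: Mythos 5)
Your proof is correct and follows essentially the same route as the paper's: construct $u$ via the universal properties of cokernels and the pullback, then verify surjectivity on elements after a Freyd--Mitchell embedding. In fact your correction step (replacing $\tilde a$ by $\tilde a' = \tilde a - \ell_1 f(z)$ so that the lifted pair becomes compatible over $M$) makes explicit exactly the point the paper's chase glosses over, since the pair $(\lambda,\rho)$ chosen there need not lie in $L\times_M R$ without such an adjustment.
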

\begin{proof}
We liberally assume to be working in a concrete Abelian category of modules, by Freyd--Mitchell embedding \cite{mitchell-embedding}.

From the situation described, one always gets a commutative square of cokernels which, by universality of pullbacks, yields a unique arrow as dashed below:
\[
\adjustbox{scale=1.5, max width=.95\textwidth}{
\begin{tikzcd}[ampersand replacement=\&]
	\&\&\& {\coker(\ell_1f)} \&[-6ex] \\
	{\coker f} \&\& {\coker(\ell_1f) \times_{\coker(\ell_2\ell_1f)}\coker(r_1f)} \&\& {\coker(\ell_2\ell_1f)} \\
	\&\&\& {\coker(r_1f)}
	\arrow["{\ell_2^*}", from=1-4, to=2-5]
	\arrow["{r_2^*}"', from=3-4, to=2-5]
	\arrow["{\ell_1^*}", curve={height=-12pt}, from=2-1, to=1-4]
	\arrow["{r_1^*}"', curve={height=12pt}, from=2-1, to=3-4]
	\arrow[from=2-3, to=1-4]
	\arrow[from=2-3, to=3-4]
	\arrow["\lrcorner"{anchor=center, pos=0.125, rotate=45}, draw=none, from=2-3, to=2-5]
	\arrow["{\exists!\, u}"{description}, dashed, from=2-1, to=2-3]
\end{tikzcd}
}
\]
Here $(-)^*$ denotes the maps induced between cokernels.

Any element of $\coker(\ell_1 f) \times_{\coker(\ell_2\ell_1 f)} \coker(r_1 f)$ is a coset of the form
\[
    (S_L, S_R) := (\lambda + \imageObj(\ell_1f), \rho + \imageObj(r_1f))
\]
for some $\lambda \in L$ and $\rho \in R$ such that $\ell_2^*(S_L) = r_2^*(S_R)$. 
Thus the coset $(\lambda, \rho) + \im f \in \coker f$ maps to $(S_L, S_R)$, which proves $u$ is an epimorphism.
\end{proof}

\begin{theorem}\label{thm:lavish}
Suppose $F$ is an Abelian presheaf on a site $(\cat{C},J)$ where $J$ is a finitely generated Grothendieck topology. If $F$ is flasque and separated, then $H^0(-,F) \colon \cat{C}\op \to \cat{A}$ is lavish with respect to $J$.
\end{theorem}
\begin{proof}
Since $F$ is separated, a single application of the plus construction amounts to sheafification. With this in mind, for any cover $(U_i \to X)_{i\in I}$ of $X$ in $J$ one has 
\begin{align*}
    &H^0(X, F) = \coker (F X \xrightarrow{\eta_{F_X}} F^+ X) &(\text{Theorem~\ref{thm:cokernel}})\\
    &\cong \coker \bigl( F X \xrightarrow{\eta_{F_X}} \ker( \prod_i F^+ U_i \xrightarrow{p-q} \prod_{i,j} F^+ U_{i,j}) \bigr) &(F^+ \text{ is a sheaf})\\
    &\twoheadrightarrow \ker \bigl( \coker(F X \to \prod_i F^+ U_i) \to \coker(F X \to \prod_{i,j} F^+ U_{i,j}))\bigr) &\text{(Lemma~\ref{lemma:cokernels-quasi-distribute-over-pullback})} \\ 
    &= \ker \bigl( \coker(\prod_i (F X \to  F^+ U_i)) \to \coker(\prod_{i,j} (F X \to  F^+ U_{i,j})))\bigr) \\
\intertext{which, since colimits commute with each other, since \textit{finite} products and \textit{finite} coproducts agree in Abelian categories and since the covers in $J$ are \textit{finitely generated} (see Paragraph~\ref{para:assumptions}), yields}
    &= \ker \bigl( \prod_i \coker(F X \to  F^+U_i) \to \prod_{i,j} \coker(F X \to F^+ U_{i,j}))\bigr) \\
    &= \ker \bigl( \prod_i \coker(F X \twoheadrightarrow F U_i \to  F^+ U_i) \to \prod_{i,j} \coker(F X  \twoheadrightarrow F U_{i,j} \to F^+ U_{i,j}))\bigr) &(\eta\text{ is natural})\\
    &\cong \ker \bigl( \prod_i \coker(F U_i \to  F^+ U_i) \to \prod_{i,j} \coker(F U_{i,j} \to F^+ U_{i,j}))\bigr) &\text{($F$ is flasque)}\\
    &= \ker \bigl( \prod_i H^0(U_i, F) \to \prod_{i,j} H^0(U_{i,j}, F)\bigr) &(\text{Theorem~\ref{thm:cokernel}}).
\end{align*}
\end{proof}

\body{
Although none of the problems that we saw so far are flasque, we will show in the next section that there is a functorial construction that sends any presheaf to a \textit{flasque and separated} presheaf that records relevant combinatorial information and this construction can be directly applied to the old \textsc{VertexCover}, \textsc{CycleCover} and \textsc{BipartiteCover} problems -- as well as any problems that can be represented by presheaves on $\sub(G)$.
}

\section{Model collecting}

\body{In this section, instead of considering the obstructions to ``algorithmic compositionality'' (i.e. obstructions to being a sheaf), we will focus on identifying suitable abstract language to speak about obstructions to having \textit{any} solution at all. Given a presheaf $F$ that describes a computational problem, we will define another presheaf $\mathfrak{M}F$ which will send any object $H$ to the set 
\[\{H' \subseteq H \mid F(H') \neq \emptyset\}\]
of all subgraphs of $H$ such that which are yes-instances for $F$-\textsc{Decision}. In a way, $\mathfrak{M}F(H)$ can be thought of as \textit{collecting} all the subgraphs of $H$ that are \textit{models}\footnote{Our motivation for this terminology is the idea that when we have a formula $p$ describing a property in the logic of graphs, we can consider the set $\{G' \hookrightarrow G : G' \vDash p\}$ which will be the models for $p$.} for $F$. The rest of this section is devoted to the proof of the following fact.}

\begin{theorem}\label{thm: model collecting}
    There exists a covariant functor $\mathfrak{M}$, called the \textbf{model collecting functor}, that maps any presheaf $F \colon \cat{C}^{\op} \to \cat{Set}$, where $\cat{C}$ is a category with pullbacks, to a flasque presheaf $\mathfrak{M}F$. Moreover, if $\cat{C} = \sub(G)$ and the complete graph with two vertices has a solution for $F$, then
    \begin{itemize}
        \item $H^0(X,\mathfrak{M}F) = \Z[\{X' \subseteq X: F(X') = \emptyset\}]$;
        \item $FX \neq \emptyset$ iff $H^0(X,\mathfrak{M}F) = 0$.
    \end{itemize}
\end{theorem}

\body{The above result tells us we can use cohomology to decide if a graph $X$ has a solution for a certain problem $F$. For example consider König's Theorem which states that a bipartite graph $X$ admits a matching of size greater than $k$ if and only if $X$ has no vertex cover with size at most $k$. This theorem can be restated in terms of the zeroth presheaf cohomology of the model collecting functor, as follows.}

\begin{theorem}[König's Theorem, Cohomologically]
    If $X$ is a bipartite graph, then $$H^0(X,\mathfrak{M}\mathcal{V}_{\leq k}) = \{\text{matchings of }X \text{ of size greater than }k\}.$$
\end{theorem}

\body{Although we don't provide an algebraic proof of König's Theorem, the above result points to the fact that cohomological methods provide a possible avenue towards stating and proving results in extremal graph theory. However, this is beyond the scope of the present paper and we leave it as future work.}

\subsection{Proof of Theorem~\ref{thm: model collecting}}
\body{Given a presheaf $F \colon \cat{C}^{\cat{op}} \to \Set$, we define
\[\begin{array}{ccccc}
    & \mathfrak{M}F \colon & \cat{C}^{\cat{op}} & \longrightarrow         &  \Set\\
    \\
    &         & C              & \longmapsto &  \{C' \hookrightarrow C: F(C') \neq \emptyset \}
    \end{array}\]
and given $f \colon C \hookrightarrow D$, we put

\[\begin{array}{ccccc}
    & \mathfrak{M}F(f) \colon & \mathfrak{M}F(D) & \longrightarrow         &  \mathfrak{M}F(C)\\
    \\
    &         & D'              & \longmapsto &  D' \times_D C
    \end{array}\]
where $D' \times_D C$ is the pullback of $D' \hookrightarrow D$ and $C \hookrightarrow D$ in the category $\cat{C}$.
}

\begin{lemma} \label{lemma:mc-is-functorial}
    $\mathfrak{M} \colon \Psh(\cat{C}) \to \Psh(\cat{C}))$ is a covariant functor.
\end{lemma}

\begin{proof}
    Given $\alpha \colon F \Rightarrow L$ a natural transformation between two functors $F,L\colon \cat{C}^{\op} \to \Set$, we can define a natural transformation $\mathfrak{M}(\alpha)\colon \mathfrak{M}F \to \mathfrak{M}L$ that acts as an identity, since, given $C' \in \mathfrak{M}F(C)$, we have an arrow $\alpha_{C'}: F(C') \to L(C')$ with $F(C') \neq \emptyset$, so that $C' \in \mathfrak{M}L(C)$ and $\mathfrak{M}(\alpha)$ is then well defined.
    With this definition, the property of naturality of $\mathfrak{M}(\alpha)$ and the property of $\mathfrak{M}$ being a functor follow easily.
\end{proof}

\begin{lemma} \label{lemma:mc-funtor}
    For every presheaf $F\colon \cat{C}^{\op} \to \Set$, $\mathfrak{M}F$ is also a presheaf.
\end{lemma}

\begin{proof}
   We define the above construction with a property $\mathbb{P}$ defined as follows: $C$ is an object of $\mathbb{P}$ iff $F(C) \neq \emptyset$. We then consider the functor 
\[\begin{array}{ccccc}
    & \cat{C}_{\mathbb{P}} \colon & \cat{C}^{\op} & \longrightarrow         &  \Set\\
    \\
    &         & C              & \longmapsto &  \{C' \hookrightarrow C: C' \in \mathbb{P} \}
    \end{array}\]
    and given $f \colon C \hookrightarrow D$, we put
\[\begin{array}{ccccc}
    & \cat{C}_{\mathbb{P}}(f) \colon & \cat{C}_{\mathbb{P}}(D) & \longrightarrow         &  \cat{C}_{\mathbb{P}}(C)\\
    \\
    &         & D'              & \longmapsto &  D'\times_D C
    \end{array}\]
    
    Now, $p\colon \mathbb{P} \hookrightarrow \cat{C}$ pullback-absorbing because if 
    \[
\begin{tikzcd}
p(X) \times_Y Z \arrow[r, hook] \arrow[d, hook] & Z \arrow[d, hook] \\
p(X) \arrow[r, hook]                            & Y                
\end{tikzcd}\]
    is a pullback in $\cat{C}$, then
\[
\begin{tikzcd}
F(p(X) \times_Y Z) & F(Z) \arrow[l]           \\
F(p(X)) \arrow[u]  & F(Y) \arrow[l] \arrow[u]
\end{tikzcd}\]
    is a diagram in $\Set$. Since $F(X) \neq \emptyset$ and we have an arrow $F(X) \to F(X \times_Y Z)$, we have that $F(X \times_Y Z) \neq \emptyset$, so that $X \times_Y Z $ is an object of $\mathbb{P}$.

    We have that $\cat{C}_{\mathbb{P}}$ is well defined on the arrows because $p$ is pullback-absorbing. Also, it is a functor, because if $C \hookrightarrow D \hookrightarrow E$ and
    \[
\begin{tikzcd}
A = E' \times_E D \arrow[r, hook] \arrow[d, hook] & D \arrow[d, hook] &  & A \times_D C \arrow[r, hook] \arrow[d, hook] & C \arrow[d, hook] \\
E' \arrow[r, hook]                            & E                &  & A \arrow[r, hook]                            & D                
\end{tikzcd}\]
    are pullbacks, then the bigger square
    \[
\begin{tikzcd}
A' \times_D C \arrow[r, hook] \arrow[d, hook] & C \arrow[d, hook] \\
A= E' \times_D E \arrow[r, hook] \arrow[d, hook] & D \arrow[d, hook] \\
E' \arrow[r, hook]                            & E                
\end{tikzcd}\]
    is a pullback, which means that the following commutes
    \[
\begin{tikzcd}
\cat{C}_{\mathbb{P}}(E) \arrow[r] \arrow[rr, bend right] & \cat{C}_{\mathbb{P}}(D) \arrow[r] & \cat{C}_{\mathbb{P}}(C)
\end{tikzcd}\]
    And since $C' \times_C C \cong C'$, for $C' \hookrightarrow C$, we also have that $\cat{C}_{\mathbb{P}}(id_C) = id_{\cat{C}_{\mathbb{P}}(C)}$.
\end{proof}

\begin{lemma}\label{lemma:mc-factors-through-flasque}
    If we denote by $\cat{FPSh}(\cat{C})$ the category of flasque presheaves, then $\mathfrak{M}$ factors through $\cat{FPSh}(\cat{C})$, i.e., the following diagram commutes. 
    \[
\begin{tikzcd}
\Psh(\cat{C}) \arrow[rr, "\mathfrak{M}"] &                                          & \Psh(\cat{C}) \\
                                       & \cat{FPSh}(\cat{C}) \arrow[lu] \arrow[ru, hook] &            
\end{tikzcd}\]
\end{lemma}

\begin{proof}
We need to check that, for every $f \colon C \rightarrow D$, $\mathfrak{M}F(f)$ is surjective. Given $C' \in \mathfrak{M}F(C)$, ie, $C' \xrightarrow{g} C$ such that $F(C') \neq \emptyset$, we have
\[
\begin{tikzcd}
C' \arrow[r, "g"] & C \arrow[d, "f"] \\
                        & D                     
\end{tikzcd}\]
so that we have an arrow $C' \rightarrow D$ with $F(C') \neq \emptyset$. Hence, $C' \in \mathfrak{M}F(D)$ and $\mathfrak{M}F(f)(C') = C'$
\end{proof}

\begin{lemma}\label{lemma:mc-factors-through-separated}
    If we denote by $\SPsh(\cat{C})$ the category of separated presheaves, then, when $\cat{C} = \sub(G)$, we have that $\mathfrak{M}$ factors through $\SPsh(\cat{C})$, i.e., the following diagram commutes. 
    \[
\begin{tikzcd}
\Psh(\sub(G)) \arrow[rr, "\mathfrak{M}"] &                                          & \Psh(\sub(G)) \\
                                       & \SPsh(\sub(G)) \arrow[lu] \arrow[ru, hook] &            
\end{tikzcd}\]
\end{lemma}

\begin{proof}
    We note that to prove Lemma \ref{separated} we didn't use the properties $\mathbb{P}_1$ and $\mathbb{P}_2$, only the topology defined in $\sub(G)$ and the fact that elements of $\sub_{\mathbb{P}_{1,2}}(H)$ are subgraphs of $H$, so we can use the same argument to prove that $\cat{C}_{\mathbb{P}}$ is a separated presheaf when $\cat{C} = \sub(G)$.
\end{proof}

\body{Now, we wish to look at the zeroth presheaf \v{C}ech cohomology for $\mathfrak{M}F$ in the case where $\cat{C} = \sub(G)$. Again, since $\mathfrak{M}F$ is a separated presheaf, we can use Theorem~\ref{thm:cokernel} to characterize $H^0(X,\mathfrak{M}F)$ as the quotient $\mathfrak{M}F^+ /\mathfrak{M}F$. So the next step will be to describe the sheafification of $\mathfrak{M}F$.}

\begin{proposition} \label{model-collecting-sheafification}
    Suppose $F: \sub(G)^{\op} \to \Set$ is a presheaf such that $F(K_2) \neq \emptyset$, i.e., the complete graph with two vertices has a solution for $F$, then $(\mathfrak{M}F)^+ = \sub(-)$.
\end{proposition}

\begin{proof}
    Given $H \subseteq G$, a cover $\bigcup_{i \in I}H_i = H$ and $\{H'_i \in \mathfrak{M}F(H_i)\}_{i \in I}$ a matching family, we have $\bigcup_{i \in I} H'_i \in \sub(H)$. Furthermore, given an edge $\alpha = K \in E(H)$, if we consider $f \colon K \hookrightarrow H$, we have
    \[ \sub(f)(H') = H' \cap K, ~~\forall H' \in \sub(H). \]
    Now, because $K \cong K_2$, by hypothesis we have $F(K) \neq \emptyset$. Since $H' \cap K \hookrightarrow K$ and the function $F(K) \to F(H' \cap K)$ is in $\Set$, we have $F(H' \cap K) \neq \emptyset$, so that $\sub(f)(H') \in \mathfrak{M}F(H)$. By Proposition~\ref{prop:chac-sheafification}, we have that $\sub(-)$ is the sheafification of $\mathfrak{M}F$, whenever $F(K_2) \neq \emptyset$.
\end{proof}

\body{Note that the hypothesis $F(K_2)\neq 0$ is verified by all the problems we have been working on so far. Thus, the next two characterizations of $H^{0}(-,\mathfrak{M}F)$ can be applied when $F$ describes the problems of \textsc{VertexCover}, \textsc{CycleCover} and \textsc{BipartiteCover}.}

\begin{proposition} \label{h0-model-collecting}
    If $F(K_2) \neq \emptyset$, then $H^{0}(X,\mathfrak{M}F) = \Z[\{X' \subseteq X: F(X') = \emptyset\}]$.
\end{proposition}

\begin{proof}
   We fix a cover $\Ua = \{X_i\}_{i \leq m} \in \Cov(X)$ and we consider $f_X \colon \Va_{\leq k}(X) \to \Va(X)$ given by 
    \[ \mathfrak{M}F(X) \xrightarrow{\xi_{\Ua}} \Match(\Ua,\mathfrak{M}F) \hookrightarrow \coprod_{\Ua \in \Cov(X)} \Match(\Ua,\mathfrak{M}F) \xrightarrow{\pi} \mathfrak{M}F^+(X) \cong \sub(X)\]
    As it was the case for \textsc{VertexCover} and \textsc{CycleCover} in Proposition~\ref{h0-vertex-cover} and Proposition~\ref{h0-cycle-cover}, we have that $f_X$ can be seen as the inclusion $f_X \colon \mathfrak{M}F(X) \hookrightarrow \sub(X)$.
    
    Defining $Y = \sub(X)$ and $B = \mathfrak{M}F(X)$, we have that 
    $$\Z[Y-B] = \Z[\{X' \subseteq X: F(X') = \emptyset\}].$$ We then use Theorem~\ref{thm:cokernel} and Lemma~\ref{coker-set-complement} to obtain the desired result.
\end{proof}

\body{We now have a characterization the helps us proving the main result of this section.}

\begin{theorem}
        $FX \neq \emptyset$ iff $H^0(X,\mathfrak{M}F) = 0$, whenever $F(K_2) \neq \emptyset$.
\end{theorem}

\begin{proof}
    Using Proposition~\ref{h0-model-collecting} we only need to prove that
    \[ FX \neq \emptyset \Longleftrightarrow \forall X' \subseteq X, FX' \neq \emptyset \]
    The $(\Leftarrow)$ direction is trivial, since $X \subseteq X$. To check $(\Rightarrow)$, we note that if $X' \subseteq X$ then we have the restriction function $FX \to FX'$ in $\Set$ with $FX \neq \emptyset$, which implies that $FX' \neq \emptyset$. 
\end{proof}

\section{Discussion \& Future Work}

\body{This work takes a first step toward applying cohomological tools to the study of obstructions in algorithmics and combinatorics. Building on prior work~\cite{deciding-sheaves} that interprets dynamic programming as a sheaf condition -- that is, the ability to assemble global solutions from compatible local ones -- we model computational problems as functors, or presheaves, assigning sets of certificates to input instances. Within this categorical framework, we show that two key types of algorithmic obstructions -- failures of solution existence and failures of compositionality -- can be naturally detected and expressed via Čech cohomology. In particular, the zeroth cohomology group captures the emergence of global inconsistencies not visible at the local level.}

\body{
    By adapting Čech cohomology to preshaves, our work illustrates how existing cohomological methods can be meaningfully applied to classical algorithmic problems. We explore this perspective in detail through three concrete cases -- \textsc{VertexCover}, \textsc{CycleCover}, and \textsc{OddCycleTransversal} -- and show how their structural obstructions manifest cohomologically. Although our examples are specific, the approach generalizes to a broader class of problems involving monotone graph properties, suggesting wider applicability. These observations invite further exploration at the intersection of topology, category theory, and algorithm design.
}

\body{
Looking ahead, a natural next step is to investigate the role of higher Čech cohomology groups in detecting
more subtle obstructions within computational problems. Higher cohomology is defined in the usual way, as \(H^n(X, \Ua, F) = \coker(\imageObj(\delta^{n-1}) \to \ker(\delta^n))\). To explore the potential of higher cohomology in this setting, one can construct a short exact sequence of presheaves 
\[
0 \to \Va_{\leq k} \hookrightarrow \mathscr{C}_{\leq k} \twoheadrightarrow \mathscr{C}_{\leq k} / \Va_{\leq k} \to 0,
\]
relating the problems studied in this paper. This induces a long exact sequence of cohomology functors as usual. 
However, when cohomology is computed by freely Abelianizing these set-valued presheaves, the connecting map 
\[
H^0(-, \mathscr{C}_{\leq k} / \Va_{\leq k}) \to H^1(-, \Va_{\leq k}).
\]
is trivial -- again suggesting that deeper insights may require defining computational problems directly as presheaves valued in an Abelian category.
}

\bibliography{biblio}
\bibliographystyle{acm}
\end{document}